\title{Transport proofs of some functional inverse Santal\'o inequalities}
\date{\today}
\author{Matthieu Fradelizi}
\address{LAMA, Univ Gustave Eiffel, Univ Paris Est Creteil, CNRS, F-77447 Marne-la-Vall\'ee, France.}
\email{matthieu.fradelizi@univ-eiffel.fr}
\author{Nathael Gozlan \and Simon Zugmeyer}
\address{Universit\'e de Paris, CNRS, MAP5 UMR 8145, F-75006 Paris, France}
\email{nathael.gozlan@u-paris.fr, simon@zugmeyer.com}
\keywords{Blaschke-Santal\'o inequality, Mahler conjecture, Optimal Transport}
\subjclass{52A20, 52A40, 60E15}
\newtheorem{thrm}{Theorem}[section]
\newtheorem{prop}[thrm]{Proposition}
\newtheorem{coro}[thrm]{Corollary}
\newtheorem{lemm}[thrm]{Lemma}
\theoremstyle{definition}
\newtheorem{defi}[thrm]{Definition}
\theoremstyle{remark}
\newtheorem{rmrk}[thrm]{Remark}
\DeclareMathOperator{\ic}{\mathbin{\Square}} 
\newcommand{\1}{\mathbbm{1}}
\newcommand{\R}{\mathbb{R}}
\newcommand{\N}{\mathbb{N}}
\newcommand{\Sd}{\mathbb{S}}
\newcommand{\mc}{\mathcal}
\DeclareMathOperator{\supp}{supp}
\DeclareMathOperator{\dive}{div}
\DeclareMathOperator{\dom}{dom}
\DeclarePairedDelimiter{\norm}{\lVert}{\rVert} 
\DeclarePairedDelimiter{\abs}{\lvert}{\rvert}
\DeclarePairedDelimiter{\sbra}{[}{]}
\DeclarePairedDelimiter{\pare}{(}{)}
\let\oldabs\abs
\def\abs{\@ifstar{\oldabs}{\oldabs*}}
\let\oldnorm\norm
\def\norm{\@ifstar{\oldnorm}{\oldnorm*}}
\let\oldsbra\sbra
\def\sbra{\@ifstar{\oldsbra}{\oldsbra*}}
\let\oldpare\pare
\def\pare{\@ifstar{\oldpare}{\oldpare*}}
\begin{document}
\maketitle

\begin{abstract}
In this paper, we present a simple proof of a recent result of the second author which establishes that functional inverse Santal\'o inequalities follow from Entropy-Transport inequalities. Then, using transport arguments together with elementary correlation inequalities, we prove these sharp Entropy-Transport inequalities in dimension \(1\), which therefore gives an alternative transport proof of the sharp functional Mahler conjecture in dimension $1$, for both the symmetric and the general case. We also revisit the proof of the functional inverse Santal\'o inequalities in the \(n\) dimensional unconditional case using these transport ideas.
\end{abstract}

\section{Introduction} 
The classical Blaschke-Santal\'o inequality \cite{san49} gives the following sharp relation between the volume of a convex body $K$ in $\R^n$ and the volume of its polar $K^*=\{y\in\R^n; x\cdot y\le1, \forall\ x\in K\}$: there exists $z\in\R^n$ such that $|K||(K-z)^*|\le |B_2^n|^2$, where $B_2^n$ denotes the Euclidean ball of radius one. Mahler \cite{mah39a} conjectured that the following optimal lower bound holds: 
\[
|K||K^*|\ge \frac{4^n}{n!},
\]
for any centrally symmetric convex body $K$, with equality for example if $K$ is a cube. Among general convex bodies $K$, the conjecture is that the lower bound should be reached for simplices. Both conjectures were proved by Mahler in dimension $2$ \cite{mah39b}, while the conjecture for symmetric bodies was established by Iriyeh and Shibata in dimension $3$ \cite{is20} (see also \cite{fhmrz21}). The conjectures were proved for particular families of convex bodies like unconditional convex bodies \cite{sr81, mey86}, zonoids \cite{R86,GMR88}, bodies having symmetries \cite{bf13, is21}. Bourgain and Milman \cite{bm87} (see also \cite{kup08, naz12, blo14, gpv14, ber20a, ber20b}) established an asymptotic form of the conjectures by proving that there exists a constant \(c\) such that \(|K||K^*|\ge c^n/n!\). 

Functional forms of the Mahler conjectures were proposed, where the convex bodies are replaced by log-concave functions and polar convex bodies by the Fenchel-Legendre transform.
More precisely, it is conjectured that, for any convex function \(V:\R^n\to\R\cup\{+\infty\}\) such that  \(0<\int e^{-V}\,dx<+\infty\), it holds
\[
\int e^{-V}\,dx\int e^{-V^*}\,dx\ge e^n,
\]
where the Fenchel-Legendre transform of \(V\) is defined by
\[
V^*(y)=\sup_{x\in \R^n}\left\{x\cdot y-V(x)\right\},\qquad y \in \R^n.
\]
If, in addition, $V$ is even, it is conjectured that
\[
\int e^{-V}\,dx\int e^{-V^*}\,dx\ge 4^n.
\]
These functional forms were proved in dimension \(1\) in \cite{fm08a, fm08b, fm10} and the even case was proved in dimension \(2\) in \cite{fn21}. The inequality was proved for unconditional functions in \cite{fm08a, fm08b}. 
These conjectures are slightly stronger than Mahler's conjectures for sets, because the latter are implied by the former, whereas the inequality for sets must be true in any dimension for the functional inequality to hold, as proved in~\cite{fm08a}. 

To present the class of Entropy-Transport inequalities considered in this work, we need to introduce some definitions and notations. 

The set of all Borel probability measures on $\R^n$ will be denoted by $\mathcal{P}(\R^n)$. For $k\geq 1$, we will denote by $\mathcal{P}_k(\R^n)$ the subset of $\mathcal{P}(\R^n)$ of probability measures admitting a finite moment of order $k.$ Recall that $\eta \in \mathcal{P}(\R^n)$ is said log-concave, if it admits a density with respect to the Lebesgue measure of the form $e^{-V}$, where $V:\R^n \to \R\cup\{+\infty\}$ is a lower semicontinuous convex function. The function $V$ will be referred to as the potential of $\eta$. Note that we will not consider log-concave measures supported by a strict affine subspace of $\R^n.$ The moment measure associated to a log-concave probability measure $\eta$ with potential $V$ is the measure $\nu=\nabla V\#\eta$ defined as the pushforward of $\eta$ under the map $\nabla V$: in other words, for any bounded measurable test functions, it holds
\[
\int f(x)\,\nu(dx) =\int f(\nabla V(x))\,\eta(dx).
\]
We recall that convex functions are differentiable Lebesgue almost everywhere, so that this definition makes sense.
When $\eta$ does not have full support, i.e. when $\supp(\eta)\neq \R^n$, some extra regularity will be required at the boundary. We will say that a log-concave probability measure $\eta$, with potential $V$, has an essentially continuous density, if $e^{-V}(x)=0$ for $\mc H_{n-1}$ almost all $x \in \partial\, \mathrm{Supp} (\eta)$, where $\mathrm{Supp} (\eta)$ denotes the support of $\eta.$ Note that this terminology slightly differs from the one of \cite{CEK} where it was the potential $V$ that was called essentially continuous.
\begin{defi}[Entropy-Transport inequality]\label{defi:ET} We will say that the inequality \(\mathrm{ET}_n(c)\) is satisfied for some constant \(c>0\) if, for all log-concave probability measures \(\eta_1,\eta_2\) on $\R^n$ having essentially continuous densities, it holds
\begin{equation}\label{eq:ETn}
H(\eta_1) + H(\eta_2)  \leq -n\log (ce^2) + \mc T(\nu_1,\nu_2),
\end{equation}
where \(\nu_1,\nu_2\) are the moment measures of \(\eta_1,\eta_2\).\\
Similarly, we say that \(\mathrm{ET}_{n,s}(c)\) is satisfied, if equation~\eqref{eq:ETn} holds for all log-concave measures \(\eta_1,\eta_2\) that are also symmetric (i.e. such that  \(\nu_i(A)=\nu_i(-A)\) for all measurable sets \(A\)).
\end{defi}

In the definition above, \(H(\eta)\) denotes the relative entropy of $\eta$ with respect to the Lebesgue measure (which is also equal to minus the Shannon entropy of $\eta$) and is defined by
\[
H(\eta)=\int\log\pare{\frac{d\eta}{dx}}\, d\eta.
\]
The quantity \(\mc T\) appearing in \eqref{eq:ETn} is the so-called maximal correlation optimal transport cost, defined, for any $\mu_1,\mu_2 \in \mathcal{P}_1(\R^n)$, by
\[
    \mc T(\mu_1,\mu_2) = \inf_{f\in\mc F(\R^n)} \left\{ \int f\,d\mu_1 + \int f^*\,d\mu_2 \right\},\]
where \(\mc F (\R^n)\) is the set of convex and lower semicontinuous functions $f : \R^n \to \R\cup\{+\infty\}$ which are proper (i.e. take at least one finite value). 
Since elements of $\mathcal{F}(\R^n)$ always admit affine lower bounds, note that $\int g\,d\mu_i$ makes sense in $\R\cup\{+\infty\}$ for all $g\in \mathcal{F}(\R^n)$, so that $\mc T(\mu_1,\mu_2)$ is well defined whenever $\mu_1,\mu_2 \in \mathcal{P}_1(\R^n).$ In the case where $\mu_1,\mu_2 \in \mathcal{P}_2(\R^n)$, it follows from the Kantorovich duality theorem \cite{Vil09} that 
\[
\mc T(\mu_1,\mu_2)= \sup_{X_1\sim \mu_1, X_2\sim \mu_2} \mathbb E[X_1\cdot X_2]= \sup_{\pi\in\Pi(\mu_1,\mu_2)} \int x\cdot y \,\pi(dxdy),
\]
where 
\(\Pi(\mu_1,\mu_2)\) denotes the set of probability measures on \(\R^n\times\R^n\) with marginals \(\mu_1\) and \(\mu_2\).

Definition \ref{defi:ET} is motivated by a recent result of the second author~\cite{goz21}, which states that inequality~\eqref{eq:ETn} is equivalent to the functional version of Mahler's conjecture (also called inverse Santal\'o inequality), as formulated by Klartag and Milman \cite{km05} and Fradelizi and Meyer~\cite{fm08a} that we now recall.
\begin{defi}[Inverse Santal\'o inequality]
We will say that the inequality \(\mathrm{IS}_n(c)\) is satisfied for some $c$, if for all functions \(f\in\mc F(\R^n)\) such that both \(\int e^{-f(x)}dx\) and \(\int e^{-f^*(x)}dx\) are positive, it holds 
  \begin{equation}\label{eq:ISn}
    \int e^{-f(x)}\,dx\int e^{-f^*(x)}\,dx \geq c^n.
  \end{equation}
  Similarly, we say that \(\mathrm{IS}_{n,s}(c)\) is satisfied if equation~\eqref{eq:ISn} holds for all even functions \(\mc F(\R^n)\).
\end{defi}
With this definition, the functional forms of Mahler's conjectures are \(\mathrm{IS}_{n}(e)\) and \(\mathrm{IS}_{n,s}(4)\).

\begin{thrm}[\cite{goz21}]\label{thm:goz21}
The inequality \(\mathrm{ET}_n(c)\) (resp. \(\mathrm{ET}_{n,s}(c)\)) is equivalent to \(\mathrm{IS}_{n}(c)\) (resp. \(\mathrm{IS}_{n,s}(c)\)). 
\end{thrm}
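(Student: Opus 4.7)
The plan is to give a short proof of the implication $\mathrm{ET}_n(c) \Rightarrow \mathrm{IS}_n(c)$ via a key integration-by-parts identity relating entropies to moment measures; the converse implication $\mathrm{IS}_n(c) \Rightarrow \mathrm{ET}_n(c)$ was established in~\cite{goz21}.

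\textbf{Step 1: Key identity and reformulation of ET.} I will first establish that, for any log-concave probability measure $\eta = e^{-V}\, dx$ with essentially continuous density and moment measure $\nu = \nabla V \# \eta$,
\[
H(\eta) = \int V^*\, d\nu - n.
\]
This follows from the Fenchel--Young equality $V^*(\nabla V(x)) = x \cdot \nabla V(x) - V(x)$ (valid Lebesgue a.e.) together with the integration-by-parts identity $\int x \cdot \nabla V\, e^{-V}\, dx = n$, where essential continuity is precisely what discards the boundary contribution. Substituting into $\mathrm{ET}_n(c)$ and using the Kantorovich duality $\mathcal{T}(\nu_1, \nu_2) = \inf_{f \in \mathcal{F}(\R^n)}\{\int f\, d\nu_1 + \int f^*\, d\nu_2\}$, the inequality $\mathrm{ET}_n(c)$ becomes equivalent to: for all log-concave essentially continuous $\eta_1, \eta_2$ with potentials $V_i$ and moment measures $\nu_i$, and all $f \in \mathcal{F}(\R^n)$,
\[
\int (f - V_1^*)\, d\nu_1 + \int (f^* - V_2^*)\, d\nu_2 \geq n \log c. \qquad (\star)
\]

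\textbf{Step 2: $(\star) \Rightarrow \mathrm{IS}_n(c)$.} Given $f \in \mathcal{F}(\R^n)$ with $\int e^{-f}$ and $\int e^{-f^*}$ both positive, I form the log-concave probability measures $\eta_1 := e^{-f}/\int e^{-f}$ and $\eta_2 := e^{-f^*}/\int e^{-f^*}$, whose potentials are $V_1 = f + a$ and $V_2 = f^* + b$ with $a := \log \int e^{-f}$ and $b := \log \int e^{-f^*}$; in particular $V_1^* = f^* - a$ and $V_2^* = f - b$ (using $f^{**} = f$ for $f \in \mathcal{F}(\R^n)$). Applying $(\star)$ to this pair with the convex test function $g := V_1^*$ (so that $g^* = V_1$), one computes $g - V_1^* \equiv 0$ and $g^* - V_2^* = V_1 - V_2^* = a + b$, which is a constant. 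Since $\nu_2$ is a probability measure, $(\star)$ collapses to $a + b \geq n \log c$, i.e., $\log \int e^{-f} + \log \int e^{-f^*} \geq n\log c$, which is precisely $\mathrm{IS}_n(c)$. The symmetric version $\mathrm{ET}_{n,s}(c) \Rightarrow \mathrm{IS}_{n,s}(c)$ follows by the same construction, since an even $f$ yields even $\eta_i$ whose moment measures are symmetric. A technical point is that for a general $f \in \mathcal{F}(\R^n)$ the $\eta_i$ need not be essentially continuous (e.g., when $f$ equals $0$ on a bounded convex set and $+\infty$ elsewhere), and I plan to handle this by first proving the inequality on a dense subclass of regular $f$ (e.g., after adding a term $\epsilon|x|^2/2$) and passing to the limit.

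\textbf{Step 3: Main obstacle---the converse implication.} The principal difficulty of the full equivalence is the implication $\mathrm{IS}_n(c) \Rightarrow \mathrm{ET}_n(c)$, which I will not reprove here but only invoke from~\cite{goz21}. The obstruction is that a naive inversion of the construction in Step~2 only produces dual pairs $(\eta_1, \eta_2)$ satisfying $V_2 = V_1^* + \mathrm{const}$, whereas ET must hold for arbitrary pairs. Extending to general pairs requires substantial additional input, such as a Prékopa--Leindler-type inequality combined with the Cordero-Erausquin--Klartag variational characterization of moment measures, which is the heart of Gozlan's original argument.
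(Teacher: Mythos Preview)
Your argument is correct and coincides with the paper's: the identity $H(\eta)=\int V^*\,d\nu-n$ is exactly Lemma~\ref{lem:basic} combined with Lemma~\ref{lem:corr}, and your choice of the pair $(\eta_1,\eta_2)=(e^{-f}/Z,\,e^{-f^*}/Z^*)$ together with the test function $g=V_1^*$ in $(\star)$ is precisely the paper's use of the bound $\mathcal T(\nu,\nu^*)\le \int V^*\,d\nu+\int V\,d\nu^*$ in Proposition~\ref{prop:ETIS}. One small caveat: your proposed regularization $f\mapsto f+\varepsilon|x|^2/2$ does not make $e^{-f}$ essentially continuous when $\dom f\neq\R^n$; the paper instead uses the two-sided Moreau--Yosida regularization $V_k=V\,\ic\,(k|\cdot|^2/2)+|x|^2/(2k)$, which forces both $V_k$ and $V_k^*$ to have full domain.
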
 
As shown in Theorem 1.2 of \cite{goz21}, inequalities \(\mathrm{ET}_n(c)\) or \(\mathrm{ET}_{n,s}(c)\) can be restated as improved versions of the Gaussian log-Sobolev inequality. In particular, the results of \cite{fm08a, fm08b} lead to sharp lower bounds on the deficit in the Gaussian log-Sobolev inequality for  unconditional probability measures (see Theorem 1.4 of \cite{goz21}).

The main contributions of the paper are the following. In Section \ref{sec:ET-IS} we give a new proof of the implication 
\[
\mathrm{ET}_n(c) \Rightarrow \mathrm{IS}_{n}(c),
\]
and we show, in particular in Corollary \ref{cor:ETIS}, that only a restricted form of the inequality \(\mathrm{ET}_n(c)\) is enough to get \(\mathrm{IS}_{n}(c)\).
This new proof significantly simplifies the proof given in \cite{goz21}. Then, we prove in Section \ref{sec:ET1}, using transport arguments together with correlation inequalities, that \(\mathrm{ET}_{1}(e)\) and \(\mathrm{ET}_{1,s}(4)\) are satisfied. In particular, this gives new and short proofs of the sharp functional Mahler conjecture in dimension \(1\).
Finally, in Section~\ref{sec4}, we propose a short proof of \(\mathrm{IS}_{n,s}(4)\) when we restrict ourselves to unconditional functions, i.e. functions that are symmetric with respect to all coordinate hyperplanes, blending tools from this paper and the proof given in~\cite{fm08a}.

\section{Entropy-Transport and inverse Santal\'o inequalities}\label{sec:ET-IS}

\subsection{From Entropy-Transport to inverse Santal\'o inequalities}
The following result provides the key identity connecting the quantities appearing in the inverse functional inequalities to their dual transport-entropy counterparts. 
\begin{lemm}\label{lem:basic}
Let $V:\R^n\to \R\cup\{+\infty\}$ be a convex function such that \(Z:=\int e^{-V}\,dx \in (0,\infty)\) and let \(\nu\) be the moment measure of \(\eta(dx) = \frac{1}{Z}e^{-V}\,dx\). Then, it holds
\begin{equation}\label{eq:eqbasic}
- \log\pare{\int e^{-V}\,dx} = \int -V^*\,d\nu + \mc{T}(\nu,\eta)+H(\eta).
\end{equation}
\end{lemm}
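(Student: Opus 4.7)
The plan is to reduce \eqref{eq:eqbasic} to establishing the single equality
\[
\mc T(\nu,\eta) = \int V\,d\eta + \int V^*\,d\nu.
\]
Once this is in hand, the conclusion is immediate: since $\eta$ has density $e^{-V}/Z$, computing the relative entropy directly gives
\[
H(\eta) = \int \log\pare{e^{-V}/Z}\,d\eta = -\log Z - \int V\,d\eta,
\]
which rearranges to $-\log Z = H(\eta) + \int V\,d\eta$, and substituting the claimed value of $\mc T(\nu,\eta)$ reproduces \eqref{eq:eqbasic}.

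The upper bound $\mc T(\nu,\eta) \leq \int V^*\,d\nu + \int V\,d\eta$ will follow directly from the very definition of $\mc T$ as an infimum, by using $f = V^*$ as a test function. This is admissible because $V^*\in \mc F(\R^n)$, and by involutivity of the Legendre transform on lower semicontinuous convex functions one has $(V^*)^* = V$ (modulo replacing $V$ by its lower semicontinuous envelope, which affects neither $\eta$ nor $\nu$).

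For the matching lower bound, I would invoke the Kantorovich dual formulation $\mc T(\nu,\eta) = \sup \mathbb{E}[Y\cdot X]$ recalled after the definition of $\mc T$, and exhibit a specific admissible coupling. The natural candidate is $(Y,X) := (\nabla V(X), X)$ with $X \sim \eta$: by definition of the moment measure, its first marginal is exactly $\nu$. Since $V$ is convex and differentiable $\eta$-almost everywhere, $\nabla V(X) \in \partial V(X)$ almost surely, so the equality case in the Fenchel--Young inequality gives
\[
X\cdot \nabla V(X) = V(X) + V^*(\nabla V(X)) \qquad \text{a.s.}
\]
Taking expectation and pushing the second term forward under $\nabla V$ yields $\mathbb{E}[Y\cdot X] = \int V\,d\eta + \int V^*\,d\nu$, which by the sup formulation is a lower bound on $\mc T(\nu,\eta)$. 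Combined with the previous paragraph, equality holds.

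The main technical obstacle I expect is checking that all the integrals involved are finite, so that the equalities above are meaningful and not of the indeterminate form $+\infty = +\infty$. Specifically, one needs $\eta,\nu \in \mathcal{P}_1(\R^n)$ for $\mc T$ to be well defined, $\int V\,d\eta < \infty$ (which follows from log-concavity of $\eta$, finiteness of $H(\eta)$ and of $\log Z$), and $\int V^*\,d\nu < \infty$, which is where the essential continuity assumption enters via standard facts from the theory of moment measures à la Cordero-Erausquin--Klartag. Under these hypotheses the application of Kantorovich duality with the cost $(x,y)\mapsto x\cdot y$ is also standard.
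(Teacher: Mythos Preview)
Your proof is correct and follows essentially the same route as the paper: both reduce to showing $\mc T(\nu,\eta) = \int V\,d\eta + \int V^*\,d\nu$ and then use $H(\eta) = -\log Z - \int V\,d\eta$. For the lower bound the paper stays on the dual side, showing directly that $\int f\,d\eta + \int f^*\,d\nu \geq \int x\cdot \nabla V(x)\,\eta(dx) = \int V\,d\eta + \int V^*\,d\nu$ for every $f\in\mc F(\R^n)$ via Young and the equality case of Fenchel--Young; your coupling $(\nabla V(X),X)$ is the primal rephrasing of the same computation.

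Two small corrections. First, the lemma does \emph{not} assume essential continuity, and you should not invoke it: the integrability $V\in L^1(\eta)$ and $V^*\in L^1(\nu)$ follows from \cite[Proposition~7]{CEK} for any convex $V$ with $0<\int e^{-V}<\infty$. Second, you do not need the full Kantorovich duality $\mc T=\sup$ (which would require second moments): the easy inequality $\mathbb E[Y\cdot X]\le \int f\,d\nu+\int f^*\,d\eta$ for every $f$ and every coupling, hence $\mathbb E[Y\cdot X]\le \mc T(\nu,\eta)$, already gives the lower bound you want.
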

\proof
According to Proposition 7 of \cite{CEK} and its proof, \(V^* \in L^1(\nu)\) and \(V\in L^1(\eta)\).
We claim that
\begin{equation}\label{eq:Tnueta}
\mc{T}(\nu,\eta) = \int V^*\,d\nu + \int V\,d\eta = \int x\cdot\nabla V(x)\,\eta(dx).
\end{equation}
Indeed, by definition of \(\mc T\), it is clear that the left hand side of \eqref{eq:Tnueta} is less than or equal to its right hand side.
On the other hand, if \(f \in \mathcal{F}(\R^n)\), then
\begin{align*}
\int f^*\,d\nu + \int f\,d\eta & = \int f^*(\nabla V(x)) + f(x)\,\eta(dx)\\
& \geq \int \nabla V(x)\cdot x\,\eta(dx)\\
& =  \int V^*(\nabla V (x))+V(x)\,\eta(dx)\\
& = \int V^*\,d\nu + \int V\,d\eta.
\end{align*}
Therefore, optimizing over \(f \in \mc F(\R^n)\), yields the converse inequality in \eqref{eq:Tnueta}. To conclude the proof of \eqref{eq:eqbasic}, just observe that
\[
H(\eta) = -\log Z - \int V\,d\eta.
\]
\endproof
It will be convenient to introduce the following class of potentials. We will denote by \(\mc V (\R^n)\) the class of all convex functions \(V:\R^n\to \R\) such that \(V^* : \R^n \to \R\) (thus \(V,V^*\) are continuous and with full domain).

\begin{rmrk}
Note that we proved Lemma~\ref{lem:basic} for convex \(V\) without assuming essential continuity. In the case where the measure is assumed to be essentially continuous, then Lemma~\ref{lem:corr} applies, and equation~\eqref{eq:eqbasic} reduces to
\[
- \log\pare{\int e^{-V}\,dx} = \int -V^*\,d\nu + n +H(\eta).
\]
This is true in particular whenever \(V\) is assumed to have full domain, i.e. to never take the value \(+\infty\). This case was already treated in the proof of Corollary 3 in \cite{goz21}, for example.
\end{rmrk}

Thanks to Lemma \ref{lem:basic}, we can show the following.
\begin{prop}\label{prop:ETIS}
Let \(V\in \mathcal{V}(\R^n)\); denote by \(\eta(dx)=\frac{1}{Z}e^{-V}\,dx\), \(\eta^*(dx)=\frac{1}{Z^*}e^{-V^*}\,dx\), where $Z,Z^*$ are the normalizing constants, and let \(\nu,\nu^*\) be the moment measures associated to \(\eta,\eta^*\). \\
If 
\begin{equation}\label{eq:ETreduced}
H(\eta)  + H(\eta^*)  \leq -n\log (ce^2) + \mc T(\nu,\nu^*),
\end{equation}
then
\[
\int e^{-V}\,dx\int e^{-V^*}\,dx \geq c^n.
\]
\end{prop}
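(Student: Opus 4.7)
The plan is to apply Lemma \ref{lem:basic} twice, once to $V$ and once to $V^*$, add the two identities and use the hypothesis to conclude. Since $V \in \mathcal{V}(\R^n)$ means both $V$ and $V^*$ are finite-valued and convex, $V^* \in \mathcal{V}(\R^n)$ as well, $(V^*)^* = V$, and both $Z$ and $Z^*$ lie in $(0,\infty)$ (the coercivity of $V^*$ forces $V$ to be super-linear, and vice versa), so the hypotheses of Lemma \ref{lem:basic} are satisfied for both potentials.

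First, I would write the two identities coming from Lemma \ref{lem:basic}:
\begin{align*}
-\log Z &= -\int V^*\,d\nu + \mathcal{T}(\nu,\eta) + H(\eta), \\
-\log Z^* &= -\int V\,d\nu^* + \mathcal{T}(\nu^*,\eta^*) + H(\eta^*),
\end{align*}
and sum them. Next, since $V$ and $V^*$ are Legendre dual and both belong to $\mathcal{F}(\R^n)$, the definition of $\mathcal{T}$ directly gives the inequality $\mathcal{T}(\nu,\nu^*) \leq \int V^*\,d\nu + \int V\,d\nu^*$, so the sum of the two identities above yields
\[
-\log(Z Z^*) \leq \mathcal{T}(\nu,\eta) + \mathcal{T}(\nu^*,\eta^*) + H(\eta) + H(\eta^*) - \mathcal{T}(\nu,\nu^*).
\]
Injecting the hypothesis \eqref{eq:ETreduced} transforms the right-hand side into $\mathcal{T}(\nu,\eta) + \mathcal{T}(\nu^*,\eta^*) - n\log(c e^2)$.

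The last ingredient I would need is the identity $\mathcal{T}(\nu,\eta) = \mathcal{T}(\nu^*,\eta^*) = n$. This follows from the formula $\mathcal{T}(\nu,\eta) = \int x\cdot\nabla V(x)\,\eta(dx)$ established in the proof of Lemma \ref{lem:basic}, combined with the integration by parts $\int x\cdot\nabla V(x)\,e^{-V(x)}\,dx = n\int e^{-V(x)}\,dx$; this is a standard computation valid for our class $\mathcal{V}(\R^n)$, since $V$ has full domain and thus $\eta$ is trivially essentially continuous (as recorded in the remark following Lemma \ref{lem:basic}). Plugging $\mathcal{T}(\nu,\eta) + \mathcal{T}(\nu^*,\eta^*) = 2n$ into the bound above gives $-\log(Z Z^*) \leq 2n - n\log(c e^2) = -n\log c$, which is exactly the desired inequality $Z Z^* \geq c^n$.

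The main obstacle is not analytical but notational: one must track carefully which inequality becomes an equality and which one absorbs the slack, and verify that $V^*$ legitimately sits in the framework of Lemma \ref{lem:basic} so that the dual identity can be written. Once the two identities are summed, the rest is an essentially algebraic cancellation using the Fenchel-Young-type bound on $\mathcal{T}(\nu,\nu^*)$ and the integration-by-parts normalization $\mathcal{T}(\nu,\eta) = n$.
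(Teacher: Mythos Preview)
Your proof is correct and follows essentially the same route as the paper: apply Lemma~\ref{lem:basic} to both $V$ and $V^*$, sum the two identities, use the Fenchel--Young bound $\mathcal{T}(\nu,\nu^*)\le \int V^*\,d\nu+\int V\,d\nu^*$, inject the hypothesis~\eqref{eq:ETreduced}, and conclude via $\mathcal{T}(\nu,\eta)=\mathcal{T}(\nu^*,\eta^*)=n$. The only cosmetic difference is that the paper replaces $\mathcal{T}(\nu,\eta)$ by $n$ (via Lemma~\ref{lem:corr}) at the outset rather than at the end.
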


Note that, according to e.g Lemma 4 in \cite{goz21}, if \(V\in \mc V(\R^n)\)  then \(Z:=\int e^{-V}\,dx\) and \(Z^*:=\int e^{-V^*}\,dx\) are both finite, and so the log concave probability measures \(\eta\) and \(\eta^*\) are well defined.

\proof
Applying Lemma \ref{lem:basic} and Lemma~\ref{lem:corr} to \(V\) and \(V^*\) yields
\begin{align*}
- \log\pare{\int e^{-V}\,dx} &= \int -V^*\,d\nu + \mc{T}(\nu,\eta)+H(\eta) = \int -V^*\,d\nu + n +H(\eta)\\
- \log\pare{\int e^{-V^*}\,dx}& = \int -V\,d\nu^* + n+H(\eta^*).
\end{align*}
Adding these two identities yields
\begin{align*}
- \log\pare{\int e^{-V}\,dx\int e^{-V^*}\,dx} &= -\pare{\int V^*\,d\nu +\int V\,d\nu^*}+ H(\eta)+H(\eta^*)+2n\\
& \leq -\mc T(\nu,\nu^*)+H(\eta)+H(\eta^*)+2n\\
& \leq - n\log(ce^2) + 2n = -\log(c^n),
\end{align*}
where the first inequality comes from the definition of \(\mc T(\nu,\nu^*)\) and the second inequality from~\eqref{eq:ETreduced}.

\endproof

\begin{coro}\label{cor:ETIS} Inequality \(\mathrm{IS}_n(c)\) (resp. \(\mathrm{IS}_{n,s}(c)\)) holds true as soon as for all \(V\in \mc V(\R^n)\) (resp. for all symmetric \(V\in \mc V(\R^n)\))
\[
H(\eta)  + H(\eta^*)  \leq -n\log (ce^2) + \mc T(\nu,\nu^*),
\]
where \(\eta(dx)=\frac{1}{Z}e^{-V}\,dx\), \(\eta^*(dx)=\frac{1}{Z^*}e^{-V^*}\,dx\) with $Z,Z^*$ the normalizing constants and where \(\nu,\nu^*\) are the moment measures associated to \(\eta,\eta^*\).\\
\end{coro}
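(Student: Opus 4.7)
The plan is to deduce the statement from Proposition~\ref{prop:ETIS} by an approximation argument that extends the inequality from the class $\mathcal{V}(\R^n)$, on which the hypothesis is formulated, to the whole class $\mathcal{F}(\R^n)$. Given $f \in \mathcal{F}(\R^n)$ with both $\int e^{-f}\,dx$ and $\int e^{-f^*}\,dx$ positive, one may assume both are finite, since otherwise the desired inequality $\mathrm{IS}_n(c)$ is trivial.

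To reduce to $\mathcal{V}(\R^n)$, I would use the two-sided Moreau regularisation
\[
V_k = \bigl(f \ic \tfrac{k}{2}|\cdot|^2\bigr) + \tfrac{1}{2k}|\cdot|^2, \qquad k \geq 1,
\]
where $\ic$ denotes inf-convolution. The first summand (the Moreau envelope of $f$) is real-valued on all of $\R^n$, and dualising gives $V_k^* = \bigl(f^* + \tfrac{1}{2k}|\cdot|^2\bigr) \ic \tfrac{k}{2}|\cdot|^2$, which is also real-valued everywhere; hence $V_k \in \mathcal{V}(\R^n)$. The construction manifestly preserves evenness, so the symmetric case is handled identically. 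By the hypothesis of the Corollary, each $V_k$ satisfies inequality~\eqref{eq:ETreduced}, and Proposition~\ref{prop:ETIS} then yields
\[
\int e^{-V_k}\,dx \int e^{-V_k^*}\,dx \geq c^n \qquad \text{for every } k \geq 1.
\]

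The main obstacle is passing to the limit $k \to \infty$. Pointwise convergence $V_k \to f$ and $V_k^* \to f^*$ is a classical property of the Moreau envelope. For the convergence of the integrals, I would apply dominated convergence, exploiting the monotonicity of $f \ic \tfrac{k}{2}|\cdot|^2$ in $k$: one obtains $V_k \geq f \ic \tfrac{1}{2}|\cdot|^2$ and analogously $V_k^* \geq f^* \ic \tfrac{1}{2}|\cdot|^2$ for $k \geq 1$, giving the uniform majorants $e^{-V_k} \leq e^{-(f \ic \frac{1}{2}|\cdot|^2)}$ and $e^{-V_k^*} \leq e^{-(f^* \ic \frac{1}{2}|\cdot|^2)}$. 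Their integrability is the only non-trivial point and follows from $\int e^{-f}\,dx,\ \int e^{-f^*}\,dx < \infty$ by a log-concavity estimate comparing the sup-convolution $e^{-(g \ic q)}$ with the ordinary convolution $e^{-g} \ast e^{-q}$ for a positive-definite quadratic $q$. Dominated convergence then yields the convergence of both integrals, and the Corollary follows by letting $k \to \infty$ in the inequality above.
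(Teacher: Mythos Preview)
Your proposal is correct and follows essentially the same route as the paper: the same two-sided Moreau regularisation $V_k=(f\ic\frac{k}{2}|\cdot|^2)+\frac{1}{2k}|\cdot|^2$ is used to reduce to $\mathcal{V}(\R^n)$, Proposition~\ref{prop:ETIS} is invoked, and one passes to the limit via dominated convergence with the majorant $e^{-(f\ic\frac{1}{2}|\cdot|^2)}$. The only cosmetic difference is that the paper first drops the $\frac{1}{2k}|\cdot|^2$ term by monotonicity and then sends $k\to\infty$ in the pure Moreau envelopes, whereas you take the limit in $V_k$ directly; both are fine since $\frac{1}{2k}|x|^2\to0$ pointwise.
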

\proof
According to Proposition \ref{prop:ETIS}, it holds
\[
\int e^{-V}\,dx\int e^{-V^*}\,dx \geq c^n,
\]
for all \(V\in \mc V(\R^n)\).
Let \(V\in \mc F(\R^n)\) be such that \(0<\int e^{-V}\,dx\int e^{-V^*}\,dx<\infty\).
For all \(k\geq1\), consider
\[
V_k(x) = V\ic\pare{k \frac{|\,\cdot\,|^2}{2}} (x) + \frac{|x|^2}{2k},\qquad x\in \R^n,
\]
where \(|\,\cdot\,|\) denotes the standard Euclidean norm on \(\R^n\) and \(\ic\) is the infimum convolution operator, defined by 
\[
f\ic g(x) = \inf\{f(y)+g(x-y) : y\in \R^n\},\qquad x\in \R^n.
\]
Since the infimum convolution leaves the class of convex functions stable, it is clear that \(V_k\) is still convex for all $k\geq1$. It is also clear that \(V_k\) takes finite values on \(\R^n\). 
Since \((f+g)^* = f^*\ic g^*\) and (equivalently) \((f\ic g)^* = f^*+g^*\) for all \(f,g \in \mc F(\R^n)\), it is not difficult to check that
\[
V_k^*(y) = \pare{V^*+\frac{|\,\cdot\,|^2}{2k}}\ic \pare{k \frac{|\,\cdot\,|^2}{2}}(y),\qquad y\in \R^n
\]
and so \(V_k^*\) takes finite values on \(\R^n\). In other words, \(V_k \in \mc V(\R^n)\) for all \(k\geq1\).
Since
\[
V_k \geq V\ic\pare{k \frac{|\,\cdot\,|^2}{2}} \qquad\text{and}\qquad V_k^* \geq V^*\ic \pare{k \frac{|\,\cdot\,|^2}{2}},
\]
one gets that
\[
\int e^{-V\ic\pare{k \frac{|\,\cdot\,|^2}{2}}}\,dx \int e^{-V^*\ic \pare{k \frac{|\,\cdot\,|^2}{2}}}\,dx  \geq \int e^{-V_k}\,dx \int e^{-V_k^*}\,dx \geq  c^n.
\]
Note that \(V\ic\pare{k \frac{|\,\cdot\,|^2}{2}}\) is the Moreau-Yosida approximation of \(V\). In particular, it is well known that if \(V \in \mc F (\R^n)\) then \(V\ic\pare{k \frac{|\,\cdot\,|^2}{2}}(x)\to V(x)\), for all \(x\in \R^n\), as \(k\to\infty\) (see e.g \cite[Lemma~3.6]{fn21}). Since \(V\ic\pare{k \frac{|\,\cdot\,|^2}{2}} \geq V\ic\pare{\frac{|\,\cdot\,|^2}{2}}\), it easily follows, from the dominated convergence theorem, that
\[
\int e^{-V\ic\pare{k \frac{|\,\cdot\,|^2}{2}}}\,dx \to \int e^{-V}\,dx,
\]
as \(k\to\infty\). Reasoning similarly for the other integral, one concludes that 
\[
\int e^{-V}\,dx\int e^{-V^*}\,dx\geq c^n,
\]
which completes the proof.
\endproof

\begin{rmrk}
Note that the functions \(V_k\) and \(V_k^*\) are both continuously differentiable on \(\R^n\). This follows from a well known regularizing property of the Moreau-Yosida approximation (see e.g \cite[Theorem 26.3]{Rock}). Therefore, the conclusion of Corollary \ref{cor:ETIS} is still true if the Entropy-Transport inequality \eqref{eq:ETreduced} is only assumed to hold for \(V\in\mathcal{V}_1(\R^n)\), where \(\mathcal{V}_1(\R^n)\) denotes the set of \(V\in \mathcal{V}(\R^n)\) such that \(V\) and \(V^*\) are continuously differentiable. 
\end{rmrk}

\subsection{Different equivalent formulations of inverse Santal\'o inequalities}

The following result gathers different equivalent formulations of \(\mathrm{IS}_{n}(c)\).

\begin{thrm}\label{thm:resume}
Let \(c>0\); the following statements are equivalent:
\begin{enumerate}
\item[\((i)\)] the inequality \(\mathrm{IS}_{n}(c)\) holds,
\item[\((ii)\)] the inequality \(\mathrm{ET}_n(c)\) holds,
\item[\((iii)\)] for all \(V \in \mathcal{V}(\R^n)\), 
\[
H(\eta)  + H(\eta^*)  \leq -n\log (ce^2) + \mc T(\nu,\nu^*),
\]
where \(\eta,\eta^*\) are the log-concave probability measures with respective potentials \(V,V^*\) and associated moment measures \(\nu,\nu^*\),
\item[\((iv)\)] for all \(V \in \mathcal{V}(\R^n)\), 
\[
H(\eta)  + H(\eta^*)  \leq -n\log (ce^2) + \int V^*\,d\nu + \int V\,d\nu^*,
\]
with the same notation as above.
\end{enumerate}
The same equivalence is true for \(\mathrm{IS}_{n,s}(c)\) and \(\mathrm{ES}_{n,c}(c)\) assuming in \((iii)\) and \((iv)\) that \(V \in \mc V(\R^n)\) is symmetric.
\end{thrm}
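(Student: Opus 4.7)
The natural plan is to close the chain $(ii) \Rightarrow (iii) \Rightarrow (iv) \Rightarrow (i)$ and invoke Theorem \ref{thm:goz21} to tie $(i)$ back to $(ii)$. The point of this chain is that (iii) and (iv) are weaker-looking statements than (ii) — (iii) restricts $\mathrm{ET}_n(c)$ to pairs of the form $(\eta, \eta^*)$, and (iv) further replaces the transport cost by an explicit integral — yet, as Proposition \ref{prop:ETIS} and Corollary \ref{cor:ETIS} already suggest, they are in fact sufficient to recover $\mathrm{IS}_n(c)$.

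The implication $(ii) \Rightarrow (iii)$ is immediate: if $V \in \mathcal{V}(\R^n)$, then both $V$ and $V^*$ are real-valued continuous convex functions, so $\eta$ and $\eta^*$ have essentially continuous densities and $\mathrm{ET}_n(c)$ applies to the pair $(\eta, \eta^*)$. For $(iii) \Rightarrow (iv)$, I would simply test the infimum defining $\mathcal{T}(\nu, \nu^*)$ against $f = V^*$: since $V$ is convex and lower semicontinuous, $(V^*)^* = V$, hence $f \in \mathcal{F}(\R^n)$ and
\[
\mathcal{T}(\nu, \nu^*) \leq \int V^* \, d\nu + \int V \, d\nu^*,
\]
which, combined with (iii), yields (iv).

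The main step is $(iv) \Rightarrow (i)$, and this is essentially contained in the proofs of Proposition \ref{prop:ETIS} and Corollary \ref{cor:ETIS}. More precisely, applying Lemma \ref{lem:basic} to $V$ and to $V^*$ (and using $\mathcal{T}(\nu,\eta) = \mathcal{T}(\nu^*, \eta^*) = n$, which is exactly the identity used in Proposition \ref{prop:ETIS}) gives
\[
-\log\!\left(\int e^{-V}dx \int e^{-V^*}dx\right) = -\left(\int V^* \, d\nu + \int V \, d\nu^*\right) + H(\eta) + H(\eta^*) + 2n,
\]
and inserting (iv) into the right-hand side collapses the integrals and yields $\int e^{-V}\,dx \int e^{-V^*}\,dx \geq c^n$ for every $V \in \mathcal{V}(\R^n)$. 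To extend this to arbitrary $V \in \mathcal{F}(\R^n)$ with $0 < \int e^{-V}dx \int e^{-V^*}dx < \infty$, I would invoke verbatim the Moreau-Yosida regularization argument of Corollary \ref{cor:ETIS}, which produces a sequence $V_k \in \mathcal{V}(\R^n)$ with $\int e^{-V_k}dx \to \int e^{-V}dx$ and $\int e^{-V_k^*}dx \to \int e^{-V^*}dx$.

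For the symmetric version, the same chain works, provided one only checks that evenness is preserved at each step. This is immediate: $V$ symmetric implies $V^*$, $\eta$, $\eta^*$, $\nu$ and $\nu^*$ symmetric, the test function $f = V^*$ used in $(iii) \Rightarrow (iv)$ is symmetric, and the Moreau-Yosida approximants $V_k$ remain symmetric since both the infimal convolution with $k|\cdot|^2/2$ and the addition of $|\cdot|^2/(2k)$ preserve evenness. The only real obstacle is the apparent weakness of (iv), but the identity from Lemma \ref{lem:basic} dissolves it cleanly.
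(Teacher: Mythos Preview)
Your proposal is correct and follows essentially the same route as the paper: the chain $(ii)\Rightarrow(iii)\Rightarrow(iv)\Rightarrow(i)$, with $(i)\Rightarrow(ii)$ supplied by Theorem~\ref{thm:goz21}, and the last implication obtained by combining the identity of Lemma~\ref{lem:basic} (as in Proposition~\ref{prop:ETIS}) with the Moreau--Yosida approximation of Corollary~\ref{cor:ETIS}. Your added remarks on why $(ii)\Rightarrow(iii)$ is immediate and why evenness is preserved throughout the symmetric case are accurate and only make explicit what the paper leaves implicit.
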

  
\proof
\((i)\Rightarrow (ii)\) follows from Theorem \ref{thm:goz21} proved in \cite{goz21}.\\
\((ii)\Rightarrow (iii)\) is straightforward.\\
\((iii)\Rightarrow (iv)\) follows from the inequality \(\mc T (\nu,\nu^*)\leq \int V^*\,d\nu + \int V\,d\nu^*\).\\
\((iv)\Rightarrow (i)\) follows from the proof of Proposition \ref{prop:ETIS} and Corollary~\ref{cor:ETIS}.
\endproof

\begin{rmrk}Let us make some comments on Theorem \ref{thm:resume}.
\begin{enumerate}
\item[(a)] The proof of \((i)\Rightarrow (ii)\) given in \cite{goz21} makes use of the following variational characterization of moment measures due to Cordero-Klartag \cite{CEK} and Santambrogio \cite{Sant16}: a measure \(\nu\) is the moment measure of a log-concave probability measure \(\eta\) with an essentially continuous density if and only if it is centered and not supported by an hyperplane; moreover, the measure \(\eta\) is the unique (up to translation) minimizer of the functional
\[
\mathcal{P}_1(\R^n)\to \R\cup\{+\infty\}:\eta \mapsto \mathcal{T}(\nu,\eta)+H(\eta).
\]
\item[(b)] In \cite{goz21}, the implication \((ii) \Rightarrow (i)\) has been established using the following duality formula: for all \(V\in \mc V(\R^n)\) such that \(\int e^{-V^*}\,dx>0\), it holds
\[
L(V):=-\log \left(\int e^{-V^*}\,dx\right) = \sup_{\nu\in \mc P_1(\R_n)} \left\{\int -V\,d\nu -K(\nu) \right\},
\]
with \(K(\nu)= \inf_{\eta \in \mc P_1(\R_n)} \{\mc T(\nu,\eta)+H(\eta)\}\), \(\nu \in \mc P_1(\R^n)\). This equality, established in \cite{goz21}, shows that the functionals \(L\) and \(K\) are in convex duality. The route followed in the present paper, based on the key Lemma \ref{lem:basic}, turns out to be simpler and more direct. 
\item[(c)] Let us finally highlight the fact that the equivalence of \((iii)\) and \((iv)\) is a bit surprising.
Namely, for a fixed \(V \in \mc F(\R^n)\), the formulation \((iii)\) is in general strictly stronger than \((iv)\), because the inequality \(\mc T (\nu,\nu^*)\leq \int V^*\,d\nu + \int V\,d\nu^*\) is strict in general. Indeed, equality here means that \((V^*,V)\) is a couple of Kantorovich potentials between \(\nu\) and \(\nu^*\). If \(\nu\) has a density with respect to Lebesgue, this means that \(\nabla V^*\) transports \(\nu\) onto \(\nu^*\) which is not true in general.  
\end{enumerate}
\end{rmrk}

\section{Proofs of Entropy-Transport inequalities in dimension $1$}\label{sec:ET1}
In this section, we show that inequalities \(\mathrm{ET}_{1,s}(4)\) and \(\mathrm{ET}_{1}(e)\) hold true. The reason why the case of dimension \(1\) is simple is that optimal transport maps for the cost \(\mathcal{T}\) are given in an explicit form.
Recall that the cumulative distribution function of  \(\mu \in \mathcal{P}(\R)\) is the function 
\[
F_\mu(x) = \mu((-\infty,x]),\qquad x\in \R.
\]
Its generalized inverse is the function denoted \(F_\mu^{-1}\) defined by
\[
F_\mu^{-1}(t) = \inf\{x : F_\mu(x) \geq t\},\qquad t \in (0,1).
\]

\begin{lemm}\label{lem:OTdim1}
Let \(\eta_1,\eta_2 \in \mathcal{P}_1(\R)\) be such that \(\mc T (\eta_1,\eta_2)\) is finite. It holds
\[
\mc T (\eta_1,\eta_2) \geq \int_0^1 F_{\eta_1}^{-1}(x)F_{\eta_2}^{-1}(x)\,dx,
\]
with equality if \(\eta_1,\eta_2 \in \mathcal{P}_2(\R)\). More generally, if \(\nu_1=S_1\#\eta_1\) and \(\nu_2=S_2\#\eta_2\) with \(S_1,S_2 : \R \to \R\) two measurable maps, and if \(\nu_1,\nu_2 \in \mathcal{P}_1(\R)\) are such that
\(\mc T (\nu_1,\nu_2)\) is finite, then
\[
\mc T (\nu_1,\nu_2) \geq \int_0^1 S_1(F_{\eta_1}^{-1})(x)S_2(F_{\eta_2}^{-1})(x)\,dx.
\]
\end{lemm}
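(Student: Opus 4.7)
The plan is to observe first that the initial inequality is just the second one specialized to $S_1 = S_2 = \mathrm{id}_\R$, so only the general inequality and the $\mathcal{P}_2$-equality statement really require work.

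For the general inequality, I would fix an arbitrary test function $f \in \mathcal{F}(\R)$ and combine two elementary ingredients: the Fenchel--Young inequality $f(u) + f^*(v) \geq uv$ valid for all $u,v \in \R$, and the classical change of variable $\int h\,d\mu = \int_0^1 h(F_\mu^{-1}(t))\,dt$, valid for every $\mu \in \mathcal{P}(\R)$ and every Borel function $h$ for which either side makes sense (this comes from the fact that $F_\mu^{-1}$ pushes the Lebesgue measure on $(0,1)$ forward to $\mu$). Since $\nu_i = S_i\#\eta_i$, these two facts together give
\[
\int f\,d\nu_1 + \int f^*\,d\nu_2 = \int_0^1 \bigl[f(S_1(F_{\eta_1}^{-1}(t))) + f^*(S_2(F_{\eta_2}^{-1}(t)))\bigr]\,dt \geq \int_0^1 S_1(F_{\eta_1}^{-1}(t))\,S_2(F_{\eta_2}^{-1}(t))\,dt.
\]
Taking the infimum over $f \in \mathcal{F}(\R)$ of the left-hand side then yields the claimed lower bound on $\mc T(\nu_1,\nu_2)$.

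For the equality case when $\eta_1,\eta_2 \in \mathcal{P}_2(\R)$, I would use the Kantorovich formulation $\mc T(\eta_1,\eta_2) = \sup_{\pi \in \Pi(\eta_1,\eta_2)} \int xy\,d\pi$ recalled in the introduction, and exhibit the comonotone coupling $\pi_\ast = (F_{\eta_1}^{-1},F_{\eta_2}^{-1})\#\mathrm{Leb}_{(0,1)}$. This is an element of $\Pi(\eta_1,\eta_2)$ since each $F_{\eta_i}^{-1}$ pushes Lebesgue measure on $(0,1)$ forward to $\eta_i$, and it realizes $\int xy\,d\pi_\ast = \int_0^1 F_{\eta_1}^{-1}(t) F_{\eta_2}^{-1}(t)\,dt$, providing the matching upper bound.

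The main obstacle I anticipate is the well-definedness of the right-hand side integral under the sole $\mathcal{P}_1$ assumption, since the product of two nondecreasing $L^1$ functions need not be integrable. Here the finiteness hypothesis $\mc T(\nu_1,\nu_2) < \infty$ comes to the rescue: it provides a witness $f \in \mathcal{F}(\R)$ for which the middle integral above is finite, and Fenchel--Young then forces the positive part of $S_1(F_{\eta_1}^{-1}(\cdot))\,S_2(F_{\eta_2}^{-1}(\cdot))$ to be integrable on $(0,1)$ (using also the affine minorants of $f$ and $f^*$ together with $\nu_i \in \mathcal{P}_1$ to absorb linear contributions). This makes the right-hand integral meaningful in $[-\infty,+\infty)$, after which the proof reduces to the direct application of Young's inequality described above.
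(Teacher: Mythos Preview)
Your argument for the general inequality is essentially identical to the paper's: both use that $(S_1\circ F_{\eta_1}^{-1}, S_2\circ F_{\eta_2}^{-1})$ pushes Lebesgue measure on $(0,1)$ forward to a coupling of $(\nu_1,\nu_2)$, apply Young's inequality pointwise, observe that finiteness of $\mc T(\nu_1,\nu_2)$ supplies an $f$ with $f\in L^1(\nu_1)$ and $f^*\in L^1(\nu_2)$ so that the positive part of the product is integrable, and then optimize over $f$.

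There is, however, a directional slip in your equality argument. Exhibiting the comonotone coupling $\pi_\ast$ in the formula $\mc T(\eta_1,\eta_2)=\sup_{\pi}\int xy\,d\pi$ only yields
\[
\mc T(\eta_1,\eta_2)\ \geq\ \int xy\,d\pi_\ast \ =\ \int_0^1 F_{\eta_1}^{-1}(t)F_{\eta_2}^{-1}(t)\,dt,
\]
which is the inequality you have \emph{already} established, not the reverse one. To get the missing direction $\mc T(\eta_1,\eta_2)\leq \int_0^1 F_{\eta_1}^{-1}F_{\eta_2}^{-1}$ you need the classical fact that the monotone coupling is \emph{optimal} among all couplings in dimension $1$ (equivalently, that it minimizes $W_2^2$, hence maximizes $\int xy\,d\pi$). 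The paper simply cites this as well known and notes that optimality for $W_2^2$ implies optimality for $\mc T$. You can do the same, but merely displaying $\pi_\ast$ as a feasible point of the supremum does not by itself provide the ``matching upper bound''.
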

\proof
It is well known that, if \(X\) is uniformly distributed on \((0,1)\), then \((F_{\eta_1}^{-1}(X), F_{\eta_2}^{-1}(X))\) is a coupling between \(\eta_1\) and \(\eta_2\) called the monotone coupling. Therefore, \((S_1(F_{\eta_1}^{-1}(X)), S_2(F_{\eta_2}^{-1}(X)))\) is a coupling between \(\nu_1,\nu_2\).
Suppose that \(\mc T (\nu_1,\nu_2)\) is finite, then, if \(f \in \mc F(\R)\) is such that \(f\in L^1(\nu_1)\) and \(f^*\in L^1(\nu_2)\),  Young's inequality yields
\[
f(S_1(F_{\eta_1}^{-1}(X))) + f^*(S_2(F_{\eta_2}^{-1}(X))) \geq S_1(F_{\eta_1}^{-1}(X))S_2(F_{\eta_2}^{-1}(X)).
\]
Therefore, \([S_1(F_{\eta_1}^{-1}(X))S_2(F_{\eta_2}^{-1}(X))]_+\) is integrable, and taking expectation, we get
\[
\int_0^1S_1(F_{\eta_1}^{-1}(x))S_2(F_{\eta_2}^{-1}(x))\,dx =  \mathbb{E}[S_1(F_{\eta_1}^{-1}(X))S_2(F_{\eta_2}^{-1}(X))] \leq \int f\,d\nu_1 + \int f^*\,d\nu_2.
\]
Optimizing over \(f\) gives the desired inequality.
In the case where \(S_1=S_2=\mathrm{Id}\) and \(\eta_1,\eta_2\) have finite moments of order \(2\), then it is well known that the monotone coupling is optimal for \(W_2^2\) and so also for \(\mc T\). 
\endproof

\begin{lemm}
 The inequality \(\mathrm{ET}_{1}(c)\) is satisfied as soon as for all concave functions \(f_1,f_2:[0,1]\to\R_+\) such that \(f_1(0)=f_2(0)=f_1(1)=f_2(1)=0\), 
  \begin{equation}\label{eq:OT_IS}
    \int_0^1 \log (f_1f_2)\,dx \leq -\log(e^2c) + \int_0^1 f_1'f_2'\,dx.
  \end{equation}
  Similarly, the inequality \(\mathrm{ET}_{1,s}(c)\) is satisfied as soon as inequality~\eqref{eq:OT_IS} holds for all functions \(f_1,f_2\) that are also symmetric with respect to \(1/2\), i.e. \(f_i(x)=f_i(1-x)\) for all \(x\in[0,1]\).
\end{lemm}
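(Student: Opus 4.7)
The plan is a quantile reparametrization that turns both sides of \(\mathrm{ET}_1(c)\) into integrals over \([0,1]\) of a single concave test function built from each \(\eta_i\). Given essentially continuous log-concave \(\eta_i=e^{-V_i}\,dx\) with quantile function \(\phi_i=F_{\eta_i}^{-1}\), set
\[
f_i(t):=e^{-V_i(\phi_i(t))}=\frac{1}{\phi_i'(t)},\qquad t\in(0,1),
\]
extended by \(0\) at the endpoints. This is the density of \(\eta_i\) read in the uniform coordinate, and the chain rule gives \(f_i'(t)=-V_i'(\phi_i(t))\). Since \(V_i'\) is nondecreasing (by convexity of \(V_i\)) and \(\phi_i\) is nondecreasing, \(f_i'\) is nonincreasing, so \(f_i\) is concave on \([0,1]\). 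The boundary values \(f_i(0)=f_i(1)=0\) follow from essential continuity when \(\supp\eta_i\) is bounded, and from the fact that \(V_i\to+\infty\) at infinity (forced by integrability of \(e^{-V_i}\)) when the support is unbounded. Thus \(f_i\) is an admissible test function for the hypothesis.

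Each side of \(\mathrm{ET}_1(c)\) can now be expressed through \(f_i\). The change of variables \(x=\phi_i(t)\), which pushes Lebesgue on \((0,1)\) to \(\eta_i\), gives
\[
H(\eta_i)=\int -V_i\,d\eta_i=\int_0^1 -V_i(\phi_i(t))\,dt=\int_0^1 \log f_i(t)\,dt.
\]
For the transport cost, since \(V_i'\) is nondecreasing and \(\nu_i=V_i'\#\eta_i\), the quantile of \(\nu_i\) is \(F_{\nu_i}^{-1}(t)=V_i'(\phi_i(t))=-f_i'(t)\); Lemma~\ref{lem:OTdim1} applied with \(S_i=V_i'\) then yields
\[
\mc T(\nu_1,\nu_2)\geq \int_0^1 F_{\nu_1}^{-1}(t)\,F_{\nu_2}^{-1}(t)\,dt=\int_0^1 f_1'(t)\,f_2'(t)\,dt.
\]

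Combining these two identities with inequality~\eqref{eq:OT_IS} applied to the admissible pair \((f_1,f_2)\) chains everything:
\[
H(\eta_1)+H(\eta_2)=\int_0^1\log(f_1 f_2)\,dt\leq -\log(e^2c)+\int_0^1 f_1'f_2'\,dt\leq -\log(e^2c)+\mc T(\nu_1,\nu_2),
\]
which is exactly \(\mathrm{ET}_1(c)\). For the symmetric variant, if \(\nu_i\) is symmetric then by the uniqueness (up to translation) of the potential in the Cordero-Klartag--Santambrogio variational characterization recalled in Remark~(a), one may assume \(V_i\) is even; then \(\phi_i(1-t)=-\phi_i(t)\) and hence \(f_i(1-t)=f_i(t)\), so both test functions inherit the required symmetry about \(1/2\).

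The only delicate point is that a general convex \(V_i\) is only differentiable almost everywhere, so the identity \(f_i'=-V_i'\circ\phi_i\) must be read a.e. In dimension one this is routine: \(V_i\) has a nondecreasing a.e.\ derivative and \(\phi_i\) is absolutely continuous with \(\phi_i'=1/f_i\) a.e., which justifies both the change of variables for \(H(\eta_i)\) and the identification of \(F_{\nu_i}^{-1}\). If \(\mc T(\nu_1,\nu_2)=+\infty\) the inequality \(\mathrm{ET}_1(c)\) is trivial, so one may assume finiteness, which is exactly what is needed to apply Lemma~\ref{lem:OTdim1}.
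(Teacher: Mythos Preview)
Your argument is correct and follows essentially the same route as the paper: the quantile reparametrization \(f_i=e^{-V_i}\circ F_{\eta_i}^{-1}\), the identity \(f_i'=-V_i'\circ F_{\eta_i}^{-1}\), the entropy rewriting \(H(\eta_i)=\int_0^1\log f_i\), and the lower bound on \(\mc T(\nu_1,\nu_2)\) via Lemma~\ref{lem:OTdim1} are exactly the paper's steps. The only difference is cosmetic: for the symmetric variant the paper simply uses that \(\eta_i\) is assumed symmetric (so \(V_i\) is even directly), whereas you route through the Cordero--Klartag--Santambrogio uniqueness statement, which works but is heavier than needed.
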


\begin{proof}
Let \(\eta_i(dx)=e^{-V_i}\,dx\), \(i=1,2\) be two log-concave probability measures on \(\R\) with  essentially continuous densities. This latter condition means that, for some \(-\infty \leq a_i<b_i \leq +\infty\), the convex function \(V_i\) takes finite values on \((a_i,b_i)\), is \(+\infty\) on \(\R\setminus (a_i,b_i)\) and is such that  \(V_i(x) \to +\infty\) when \(x \to a_i\) and \(x\to b_i\).
As shown in the proof of Lemma \ref{lem:basic}, 
\[
\mc T(\eta_i,\nu_i) = \int x V_i'(x)\,\eta_i(dx) =\int_{a_i}^{b_i} x V_i'(x)e^{-V_i(x)}\,dx=1,
\]
where the second equality comes from an integration by parts, thanks to the boundary conditions (see Lemma~\ref{lem:corr} in the Appendix for the case of dimension \(n\)).
To prove \(\mathrm{ET}_1(c)\), one can assume that \(\mc T(\nu_1,\nu_2)\) is finite, otherwise there is nothing to prove. Using Lemma \ref{lem:OTdim1} with \(S_i=V_i'\), we see that the inequality
\begin{equation}\label{eq:ineqinter}
H(\eta_1)+H(\eta_2)\leq -\log(ce^2) + \int_0^1 V_1'(F_{\eta_1}^{-1}(x))V_2'(F_{\eta_2}^{-1}(x))\,dx
\end{equation}
implies \(\mathrm{ET}_1(c)\).
For \(i=1,2\), define 
\[
f_i(x)= F_{\eta_i}'\circ F_{\eta_i}^{-1}(x) = \exp(-V_i\circ F_{\eta_i}^{-1}(x)),\qquad x\in (0,1).
\]
Note that, since \(F_{\eta_i}\) is strictly increasing and differentiable on \((a_i,b_i)\), the function \(F_{\eta_i}^{-1}\) is the regular inverse of the restriction of \(F_{\eta_i}\) to \((a_i,b_i)\) and is also differentiable on \((0,1)\).
Since \(F_{\eta_i}^{-1}(x) \to b_i\) as \(x\to1\) and \(\exp(-V_i(y))\to 0\) as \(y\to b_i\), one sees that \(f_i(x) \to 0\) as \(x\to 1\). Similarly, \(f_i(x) \to 0\) as \(x\to 0\). Setting \(f_i(0)=f_i(1)=0\) thus provides a continuous extension of \(f_i\) to \([0,1]\). The function \(f_i\) is moreover concave on \([0,1]\). Indeed, denoting by \(f_i'\) and \(V_i'\) the left derivatives of \(f_i,V_i\) which are well defined everywhere on \((0,1)\), we see that for all \(x\in (0,1)\),
 \[
  f_i'(x) = (F_{\eta_i}'\circ F^{-1}_{\eta_i})'(x) = \frac{F_{\eta_i}''\circ F^{-1}_{\eta_i}(x) }{F_{\eta_i}'\circ F^{-1}_{\eta_i}(x)} = - V'_i(F^{-1}_{\eta_i}(x)).
  \]
So, \(f_i'\) is decreasing on \((0,1)\), and thus \(f_i\) is concave.
Finaly, note that
\[
H(\eta_1)+H(\eta_2) = \int_0^1 \log (f_1f_2)\,dx
\]
and 
\[
\int_0^1 V_1'(F_{\eta_1}^{-1})V_2'(F_{\eta_2}^{-1})\,dx = \int_0^1 f_1'f_2'\,dx,
\]
so that inequality~\eqref{eq:ineqinter} becomes
\[
  \int_0^1 \log (f_1f_2)\,dx \leq -\log(e^2c) + \int_0^1 f_1'f_2'\,dx.
  \]
It is furthermore clear that whenever \(\eta_1,\eta_2\) are symmetric, then \(f_1,f_2\) are also symmetric with respect to \(1/2\), which concludes the proof.
\end{proof}

\begin{rmrk}
The functions \(f_i\) are related to the isoperimetric profiles of the measures \(\eta_i\) in dimension 1. Moreover, there is a one to one correspondence between log-concave measures \(\eta\) and concave \(f\) on \((0,1)\), see for example~\cite[Proposition A.1]{bob96}.  
\end{rmrk}

\subsection{The one-dimensional symmetric case}
\begin{thrm}\label{th:1ds}
 The inequality \(\mathrm{ET}_{1,s}(4)\) is satisfied and the constant \(4\) is optimal.
\end{thrm}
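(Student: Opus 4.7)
My plan is to invoke the preceding lemma to recast $\mathrm{ET}_{1,s}(4)$ as the statement that for every pair of concave $f_1, f_2 : [0,1] \to \R_+$ which are symmetric about $1/2$ and satisfy $f_i(0)=f_i(1)=0$,
\[
\int_0^1 \log(f_1 f_2)\,dx + 2\log 2 + 2 \le \int_0^1 f_1' f_2'\,dx.
\]

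For the transport-type right-hand side I apply a correlation inequality. The symmetry $f_i(x)=f_i(1-x)$ makes $f_i'$ antisymmetric about $1/2$, so $g_i(s) := -f_i'(1/2+s)$ is nonnegative on $(0,1/2)$; by the concavity of $f_i$ it is also nondecreasing, with $\int_0^{1/2} g_i\,ds = f_i(1/2) =: a_i$. Chebyshev's sum inequality applied to the pair of nondecreasing functions $(g_1,g_2)$, combined again with the symmetry, gives
\[
\int_0^1 f_1' f_2' \, dx \;=\; 2\int_0^{1/2} g_1 g_2 \, ds \;\ge\; 4\, a_1 a_2.
\]

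For the entropy-type left-hand side I rescale via $h_i(\tilde t) := a_i^{-1} f_i((1+\tilde t)/2)$, which produces concave decreasing functions $h_i : [0,1] \to [0,1]$ with $h_i(0)=1$ and $h_i(1)=0$. Performing this substitution and then optimizing the entire inequality over the pair $(a_1,a_2) \in \R_+^2$ (the map $A \mapsto \log A - 4AP$ attains its supremum $-\log(4P)-1$ at $A = 1/(4P)$) reduces the whole statement to the single sharp correlation bound
\[
\int_0^1 h_1' h_2' \, dt \;\ge\; e \cdot \exp\!\left( \int_0^1 \log(h_1 h_2)\,dt \right),
\]
for concave decreasing $h_1, h_2$ on $[0,1]$ with endpoint values $1$ at $0$ and $0$ at $1$. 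The main obstacle is the proof of this inequality. A further Chebyshev argument on the nondecreasing densities $-h_i'$ gives only $\int h_1' h_2' \ge 1$, and the chord bound $h_i(t) \ge 1-t$ coming from concavity gives only $\int \log h_i \ge -1$; the crude combination of these two is not enough. The finer argument should couple them by exploiting the fact that Chebyshev is tight precisely when one of the $h_i$ equals $1-t$, the tent, while in that same configuration the partner $h_j$ is free to concentrate its derivative near $1$ and thereby make $\int \log h_j$ approach $0$. This is exactly the extremal configuration, which corresponds back via the reduction to $V(x)=|x|$ on $\R$ and exhibits tightness.

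For the optimality of the constant $4$: by Theorem~\ref{thm:goz21} we have the equivalence $\mathrm{ET}_{1,s}(c) \Leftrightarrow \mathrm{IS}_{1,s}(c)$, and since $V(x)=|x|$ yields $\int e^{-V} \int e^{-V^*} = 2 \cdot 2 = 4$, no $c>4$ can satisfy $\mathrm{IS}_{1,s}(c)$, hence none can satisfy $\mathrm{ET}_{1,s}(c)$.
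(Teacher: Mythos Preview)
Your reduction to the normalized inequality
\[
\int_0^1 h_1' h_2'\,dt \;\ge\; e\cdot\exp\!\left(\int_0^1 \log(h_1 h_2)\,dt\right)
\]
for concave decreasing $h_i:[0,1]\to[0,1]$ with $h_i(0)=1$, $h_i(1)=0$ is correct and is in fact \emph{equivalent} to the target statement (the optimization over $a_1a_2$ is lossless because every pair $(a_i,h_i)$ corresponds to an admissible pair $(f_1,f_2)$). But you stop exactly there: you explicitly call this ``the main obstacle'' and then only describe heuristically what the extremal configuration looks like and why the crude bounds $\int h_1'h_2'\ge 1$ and $\int\log h_i\ge -1$ do not combine. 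No argument is given that proves the inequality. So the proof is incomplete in a substantive way: the entire content of the theorem has been repackaged into an inequality that you leave open.

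The paper's proof avoids this detour and supplies the missing idea. Instead of separating the maximum value $a_i=f_i(1/2)$ from the shape $h_i$, it applies the correlation (Chebyshev) inequality \emph{pointwise}: for every $x\in[0,1/2]$,
\[
f_1(x)f_2(x)=\int_0^x f_1'\int_0^x f_2'\;\le\; x\int_0^x f_1'f_2'\;\le\; x\int_0^{1/2} f_1'f_2',
\]
the last step using that $f_1'f_2'\ge 0$ on $[0,1/2]$ by symmetry. Taking logarithms, integrating over $[0,1/2]$, and using the elementary bound $\log u\le \log(1/2)+2u-1$ closes the argument in two lines. If you want to salvage your route, this pointwise use of Chebyshev is precisely the ``coupling'' you were looking for: it ties the size of $f_1f_2$ at each point to the accumulated derivative product, rather than comparing only global averages.
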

\begin{proof}
  Let \(f_1,f_2\) be two concave functions on \(\sbra{0,1}\), equal to zero at \(0\) and \(1\), and symmetric with respect to \(1/2\). Let us show that inequality \eqref{eq:OT_IS} holds true with \(c=4\). It is enough to prove that
  \[
  \int_0^{1/2}\log(f_1f_2)\,dx \leq -1 - \log(2) + \int_0^{1/2} f_1'f_2'\,dx.
  \]
  We use the following classical correlation inequality: if \(h,k : \R\to\R\) are two non-increasing functions (or non-decreasing), and if \(\mu\) is a finite measure on \(\R\), then
  \begin{equation}\label{eq:cor}
  \int_\R h(x)\,\mu(dx) \int_\R k(x)\,\mu(dx)\leq \mu(\R)\int_\R h(x)k(x)\,\mu(dx),
  \end{equation}
  which follows from the integration of the inequality
  \[
  (h(x)-h(y))(k(x)-k(y)) \geq 0.
  \]
  As a result, since \(f_1'\) and \(f_2'\) are non-increasing, we get, for all \(x\in[0,1]\), that
  \begin{align}\label{eq:cor2}
    f_1(x)f_2(x) = \int_0^x f_1'(t)\,dt \int_0^x f_2'(t)\,dt \leq x\int_0^x f_1'(t)f_2'(t)\,dt.
  \end{align}
  For a later use, note that this inequality holds also even if \(f_1,f_2\) are not symmetric.
  By symmetry, \(f_1'(t)f_2'(t)\geq0\) for all \(t\in\sbra{0,1/2}\), so we get
  \[
   f_1(x)f_2(x) \leq x\int_0^{1/2} f_1'(t)f_2'(t)\,dt,\qquad \forall x\in [0,1/2].
  \]
  Thus, after integrating,
  \begin{align*}
    \int_0^{1/2} \log(f_1(x)f_2(x))\,dx &\leq \int_0^{1/2} \log(x)\,dx + \frac{1}{2}\log\pare{\int_0^{1/2}f_1'(t)f_2'(t)\,dt}\\
    &\leq \frac{1}{2}\log\pare{\frac{1}{2}}-\frac{1}{2} + \int_0^{1/2}f_1'(t)f_2'(t)\,dt + \frac{1}{2}\log\pare{\frac{1}{2}}-\frac{1}{2}\\
    &= -1 - \log(2) + \int_0^{1/2}f_1'(t)f_2'(t)\,dt,
  \end{align*}
  where we used the inequality \(\log(x)-\log(1/2)\leq 2x -1\).

  To see that this inequality is sharp, we can use the functions \(f_1(x)=\min(x,1-x)\) and \(f_2\) an approximation of the constant function equal to \(1/2\). The optimal constant is reached at the limit.
\end{proof}

\begin{rmrk}
  The choice \(f_1(x)=\min(x,1-x)\) corresponds to the log-concave probability measure \(\eta(dx)=e^{-\abs x}\,dx/2\), the polar transform of which is the uniform probability measure on \([-1,1]\). These densities are the equality case in the functional Mahler inequality~\cite{fm08a}. However, the uniform probability measure on \([-1,1]\) is not an admissible measure in our case, since it is not essentially continuous, thus the optimality is only reached at the limit.
\end{rmrk}

\begin{rmrk}
  Inequality~\eqref{eq:OT_IS} is also satisfied if we assume only one of the functions to be symmetric. Indeed, if \(f_2\) is symmetric with respect to \(1/2\), define \(\tilde f_1(x)= \frac{1}{2}(f(x)+f(1-x)).\)
  On the one hand, using the concavity of the logarithm, 
  \begin{align*}
    \int_0^1 \log(\tilde f_1(x)f_2(x))\,dx &= \int_0^1 \log\tilde f_1(x)\,dx+\int_0^1 \log f_2(x)\,dx \\
    &\geq \frac{1}{2}\int_0^1 \log(f_1(x)) + \log(f_1(1-x))\,dx + \int_0^1 \log f_2(x)\,dx\\
    &= \int_0^1 \log f_1(x)\,dx + \log f_2(x)\,dx = \int_0^1 \log(f_1(x)f_2(x))\,dx,
  \end{align*}
  and on the other hand,
  \[
  \int \tilde f_1' f_2'\,dx = \frac{1}{2} \int_0^1 f_1'(x)f_2'(x)\,dx - \frac{1}{2}\int_0^1 f_1'(x)f_2'(1-x)\,dx,
  \]
  hence the claim, since \(f_2'(x)=-f_2'(1-x)\) for all \(x\in\sbra{0,1}\).
\end{rmrk}

\subsection{The one-dimensional general case}

\begin{thrm}\label{th:1d}
The inequality \(\mathrm{ET}_{1}(e)\) is satisfied and the constant \(e\) is sharp.
\end{thrm}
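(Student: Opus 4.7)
Plan: By the preceding lemma, establishing $\mathrm{ET}_1(e)$ reduces to showing, for all concave $f_1, f_2 : [0,1] \to \R_+$ with $f_i(0) = f_i(1) = 0$, that
\[
\int_0^1 \log(f_1 f_2)\,dx \leq -3 + \int_0^1 f_1' f_2'\,dx.
\]

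The main new input (beyond the $L^\infty$-type correlation bound that sufficed in the symmetric case) is the following averaged inequality:
\[
\int_0^1 \frac{f_1(x)f_2(x)}{x(1-x)}\,dx \leq \int_0^1 f_1'(x) f_2'(x)\,dx. \qquad (\star)
\]
To prove $(\star)$, the starting point is the pointwise estimate valid for any concave $g:[0,1]\to \R_+$ with $g(0)=g(1)=0$ and every $t\in(0,1)$:
\[
t\int_t^1 \frac{g(x)}{x}\,dx + (1-t)\int_0^t \frac{g(x)}{1-x}\,dx \leq g(t).
\]
This is immediate from the monotonicities $x \mapsto g(x)/x$ non-increasing and $x \mapsto g(x)/(1-x)$ non-decreasing (standard consequences of concavity with boundary vanishing): bounding the first integral by $(1-t)g(t)/t$ and the second by $tg(t)/(1-t)$ yields $(1-t)g(t)+tg(t)=g(t)$. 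Assuming $f_2 \in C^2$ (the general case follows by standard regularization), set $d\mu_2 = -f_2''\,dx \geq 0$; the Green's function representation $f_2(x)=\int_0^1 G(x,t)\,d\mu_2(t)$ with $G(x,t)=\min(x(1-t),t(1-x))$ gives
\[
\frac{f_2(x)}{x(1-x)} = \int_0^x \frac{t}{x}\,d\mu_2(t) + \int_x^1 \frac{1-t}{1-x}\,d\mu_2(t).
\]
Multiplying by $f_1(x)$, integrating, applying Fubini, and invoking the pointwise estimate above with $g=f_1$ produces
\[
\int_0^1 \frac{f_1 f_2}{x(1-x)}\,dx \leq \int_0^1 f_1(t)\,d\mu_2(t) = -\int_0^1 f_1 f_2''\,dx = \int_0^1 f_1' f_2'\,dx,
\]
the last equality being integration by parts, thanks to $f_1(0)=f_1(1)=0$.

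With $\psi:=f_1 f_2/(x(1-x))$, the final chain combines $\log(f_1 f_2)=\log(x(1-x))+\log\psi$, the identity $\int_0^1 \log(x(1-x))\,dx=-2$, Jensen's inequality for the concave function $\log$ against the uniform measure on $[0,1]$, the bound $(\star)$, and finally $\log z \leq z-1$:
\[
\int_0^1 \log(f_1 f_2)\,dx \leq -2 + \log\!\int_0^1 \psi\,dx \leq -2 + \log I \leq -3 + I,
\]
where $I:=\int_0^1 f_1' f_2'\,dx$. Sharpness of $e$ comes from the limit $f_1(x)=1-x$, $f_2(x)=x$ (suitably approximated by admissible functions vanishing at both endpoints): there $f_1f_2=x(1-x)$ forces $\psi \equiv 1$ and $I \to 1$, which saturates all three inequalities. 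The main obstacle is $(\star)$; combining the Green's function representation of $f_2$ with the monotonicity-based pointwise estimate is the crucial new idea beyond what was needed in the symmetric argument.
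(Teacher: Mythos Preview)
Your proof is correct and takes a genuinely different route from the paper's.

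Both arguments linearise around the extremal profile $x(1-x)$ and end up having to control $\psi(x)=\dfrac{f_1(x)f_2(x)}{x(1-x)}$ by $I=\int_0^1 f_1'f_2'$, but the mechanisms differ. The paper applies the tangent inequality $\log\psi\le\psi-1$ pointwise, restricts to $[0,1/2]$, and feeds in the same correlation bound $f_1(x)f_2(x)\le x\int_0^x f_1'f_2'$ used in the symmetric case; this produces the weighted integral $\int_0^{1/2} f_1'f_2'\log(2-2t)\,dt$, and a separate correlation-type lemma (Lemma~\ref{le:weighted_product}) is needed to remove the weight $\log(2-2t)$. You instead prove the clean global estimate $(\star)$, $\int_0^1 \psi \le I$, via the Green representation $f_2(x)=\int_0^1 G(x,t)\,d\mu_2(t)$ together with the monotonicity bounds on $f_1(x)/x$ and $f_1(x)/(1-x)$; Jensen's inequality plus $\log z\le z-1$ then finish in one line. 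Your argument is tidier in that it avoids both the half-interval splitting and the auxiliary weighted lemma, and $(\star)$ is an appealing inequality in its own right. The paper's approach, by contrast, stays closer to the elementary correlation inequality that already drove the symmetric case, keeping the toolbox uniform across the two theorems.

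One small point on sharpness: for the limiting non-admissible pair $f_1(x)=1-x$, $f_2(x)=x$ one has $f_1'f_2'\equiv -1$, so $I=-1$ and the chain through $\log I$ breaks down formally. What actually happens is that for admissible piecewise-linear approximations (e.g.\ $f_1^\varepsilon=\min(x,(1-x)/\varepsilon)$ and $f_2^\varepsilon=\min(x/\varepsilon,1-x)$) one computes $I^\varepsilon=1$ exactly, $\int\psi^\varepsilon\to1$, and $\int\log\psi^\varepsilon\to0$, so all three inequalities in your chain saturate in the limit. You assert this correctly (``$I\to1$''), but it is worth noting that the naive limit $I=-1$ is misleading; the boundary corrections in any admissible approximation always push $I$ back up to $1$.
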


\begin{proof}
Let us show that, if \(f_1,f_2:[0,1]\to\R^+\) are concave functions  vanishing at \(0\) and \(1\), then
  \[
  \int_0^1 \log(f_1f_2)\,dx \leq -3 + \int_0^1 f_1'f_2'\,dx.
  \]
 Just like before, it is enough to show that
 \[
 \int_0^{1/2} \log(f_1f_2)\,dx \leq -\frac{3}{2} + \int_0^{1/2} f_1'f_2'\,dx.
 \]
 Applying the inequality \(\log (b) \leq \log (a) + \frac{(b-a)}{a}\) to \(b=f_1f_2\) and \(a=x(1-x)\), \(x\in (0,1)\), and using again the correlation inequality~\eqref{eq:cor2}, we get  
 \begin{align*}
   \int_0^{1/2} \log(f_1f_2)\,dx & \leq \int_0^{1/2} \pare{\frac{f_1(x)f_2(x)}{x(1-x)} + \log(x(1-x)) - 1}\,dx \\
   &\leq -\frac{3}{2} + \int_0^{1/2} \frac{1}{1-x}\pare{\int_0^x f_1'(t)f_2'(t)dt}\,dx \\
   &= -\frac{3}{2} + \int_0^{1/2} f_1'(t)f_2'(t)\log(2-2t)\,dt, 
 \end{align*}
 and Lemma~\ref{le:weighted_product} below concludes the proof of the inequality.

 To see that the inequality is optimal, we choose for \(f_1\) and \(f_2\) approximations of the functions \(x\mapsto x\) and \(x\mapsto 1-x\), which of course are not admissible, since they are not zero on the boundary. It is a straightforward calculation to see that equality is reached at the limit.
\end{proof}

\begin{rmrk}
  The function \(f_1(x)= x\) is the isoperimetric profile of the log-concave probability measure \(\nu(dx)=e^{-(1+x)}\1_{[-1,+\infty[}dx\), which density 
  is an equality case in the functional Mahler inequality~\cite{fm08a}.
\end{rmrk}

\begin{lemm}\label{le:weighted_product}
  Let \(f,g:\sbra{0,1}\to\R_+\) be two concave functions  vanishing at \(0\) and \(1\). The following inequality holds:
  \begin{equation}\label{eq:weighted_product}
    \int_0^{1/2} f'(t)g'(t)\log (2-2t)\,dt \leq \int_0^{1/2}f'(t)g'(t)\,dt.
  \end{equation}
\end{lemm}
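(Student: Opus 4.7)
The plan is to recast the inequality of the lemma as $\int_0^{1/2} f'(t)g'(t)[1-\log(2-2t)]\,dt \geq 0$, and then reduce this bilinear inequality to the case where $f$ and $g$ are tent functions $\phi_a(t) := \min(t/a,(1-t)/(1-a))$, $a \in (0,1)$. Every concave $f : [0,1] \to \R_+$ vanishing at the endpoints admits such a positive tent decomposition $f(t) = \int_0^1 \phi_s(t)\,d\mu_f(s)$ with $\mu_f$ a finite non-negative Radon measure on $(0,1)$, as seen from the Dirichlet Green function $G(t,s) = s(1-s)\phi_s(t)$ of $-d^2/dt^2$ on $[0,1]$: one takes $d\mu_f(s) = s(1-s)\,d(-f'')(s)$. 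By bilinearity (and a standard approximation argument replacing $f,g$ by finite positive combinations of tents if needed), it suffices to prove that, for all $a, b \in (0,1)$,
\[
\mc{I}(a,b) := \int_0^{1/2} \phi_a'(t)\phi_b'(t)[1-\log(2-2t)]\,dt \geq 0.
\]

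Set $W(x) := \int_0^x [1-\log(2-2t)]\,dt$ and $p := W(1/2) = 1-\log 2 > 0$. Since $W'(t) = 1-\log(2-2t) \geq 1 - \log 2 = p$ on $[0,1/2]$, $W$ is non-decreasing with $W(x) \geq px$. Assume without loss of generality that $a \le b$. The case $1/2 \leq a \leq b$ is immediate, since $\phi_a', \phi_b' > 0$ on $[0,1/2]$ makes the integrand pointwise non-negative. The case $a \leq 1/2 \leq b$ is a direct computation: splitting the integral at $t=a$ (where $\phi_a'$ jumps from $1/a$ to $-1/(1-a)$, while $\phi_b' \equiv 1/b$ on $[0,1/2]$) gives
\[
ab(1-a)\,\mc{I}(a,b) = W(a) - a p \geq 0.
\]

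The main obstacle is the remaining case $a \leq b \leq 1/2$, where $\phi_a'\phi_b'$ is positive on $[0,a]$, negative on $[a,b]$, and positive on $[b, 1/2]$. Splitting the integral at $t=a$ and $t=b$, the plan is to compute
\[
ab(1-a)(1-b)\,\mc{I}(a,b) = G(a) := (1-b)W(a) - aW(b) + abp,
\]
and to prove $G \geq 0$ on $[0,b]$. For fixed $b \in (0, 1/2]$, one checks that $G(0) = 0$, that $G''(a) = (1-b)/(1-a) > 0$ (so $G$ is convex in $a$), and that $G'(0) = (1-b)(1-\log 2) - W(b) + bp = p - W(b) \geq 0$, using the monotonicity of $W$ and $W(b) \leq W(1/2) = p$. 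A convex function vanishing at $0$ with non-negative derivative there is non-negative, so $G \geq 0$ on $[0,b]$, which yields $\mc{I}(a,b) \geq 0$ and completes the proof of the lemma.
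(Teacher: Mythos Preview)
Your proof is correct; the tent--function computation checks out in all three cases, and in particular the verification that $G(a)=(1-b)W(a)-aW(b)+abp$ is nonnegative on $[0,b]$ via convexity and $G'(0)=p-W(b)\ge 0$ is clean. The reduction to tents is legitimate: piecewise linear concave functions vanishing at the endpoints are exactly finite nonnegative combinations of the $\phi_a$, and approximating a general concave $f$ by its linear interpolant on a fine partition gives convergence of the bilinear form (one may also simply note that both sides of the lemma are monotone--limit stable under such approximations).

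The paper's argument is quite different in spirit. Writing $\varphi(t)=1-\log(2-2t)$, $u=f'$, $v=g'$, it applies the Chebyshev correlation inequality to the non-increasing pair $(u,v)$ against the positive measure $\varphi\,dt$ on $[0,1/2]$, obtaining
\[
\Bigl(\int_0^{1/2}\varphi\Bigr)\Bigl(\int_0^{1/2}uv\,\varphi\Bigr)\ \ge\ \Bigl(\int_0^{1/2}u\,\varphi\Bigr)\Bigl(\int_0^{1/2}v\,\varphi\Bigr),
\]
and then shows $\int_0^{1/2}u\,\varphi\ge 0$ by an integration by parts that ultimately rests on the same pointwise bound you use, namely $\Phi(x)\ge \varphi(0)\,x$ (your $W(x)\ge px$). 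So both proofs hinge on the convexity of $W$, but the paper packages the bilinear part into a single correlation step, while you linearise the problem via the Green--function decomposition and handle three explicit cases. The correlation route is shorter and matches the toolkit used elsewhere in the paper; your extreme--point reduction is more elementary and would generalise to other bilinear functionals on concave functions where no obvious monotonicity or correlation structure is available.
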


\begin{proof}
For $0\le t\le 1/2$, we define $\varphi(t)=1-\log(2)-\log(1-t)$ and $\Phi(t)=\int_0^t\varphi(x)\,dx$. Notice that $\varphi$ is increasing on $[0,1/2]$ and $\varphi(0)=1-\log(2)>0$, hence $\varphi>0$ on $[0,1/2]$. Let $u=f'$ and $v=g'$. The functions $u$ and $v$ are non-increasing and satisfy $\int_0^1u\,dx=\int_0^1v\,dx=0$. Applying the correlation inequality~\eqref{eq:cor} again, and integrating with respect to the measure with density $\varphi$ on $[0,1/2]$, we get 
\[
\int_0^{1/2}\varphi\,dx\int_0^{1/2}uv\varphi\,dx\ge\int_0^{1/2}u\varphi\,dx\int_0^{1/2}v\varphi\,dx.
\]
Integrating by parts, one has 
\[
\int_0^{1/2}u\varphi\,dx=\Big[u\Phi\Big]_0^{1/2}+\int_0^{1/2}(-u')\Phi\,dx=u\left(\frac{1}{2}\right)\Phi\left(\frac{1}{2}\right)+\int_0^{1/2}(-u')\Phi\,dx.
\]
A quick calculation shows that $\Phi(1/2)=1-\log(2)=\varphi(0)$. Since $\varphi$ is increasing, it follows that $\Phi(x)\ge \varphi(0)x=\Phi(1/2)x$. Using this inequality, the fact that $u$ is non-increasing and integrating again by parts, we get 
\[
\int_0^{1/2}(-u'(x))\Phi(x)\,dx\ge \Phi\left(\frac{1}{2}\right)\int_0^{1/2}(-u'(x))x\,dx=\Phi\left(\frac{1}{2}\right)\left(-\Big[u(x)x\Big]_0^{1/2}+\int_0^{1/2}u(x)\,dx\right).
\]
Thus, using that $u$ is non-increasing again, we get
\[
\int_0^{1/2}u\varphi \,dx\ge\Phi\left(\frac{1}{2}\right)\left(\frac{1}{2}u\left(\frac{1}{2}\right)+\int_0^{1/2}u(x)\,dx\right)\ge\Phi\left(\frac{1}{2}\right)\int_0^1u(x)\,dx=0.
\]
One also has $\int_0^{1/2}v\varphi\,dx\ge0$, so we conclude that $\int_0^{1/2}uv\varphi\,dx \ge0$, which establishes \eqref{eq:weighted_product}.
\end{proof}

\section{Revisiting the unconditional case}\label{sec4}
Recall that a function \(V:\R^n \to \R\) is said unconditional if 
\[
V(x_1,\ldots,x_n) = V(|x_1|,\ldots,|x_n|),\qquad \forall x\in \R^n.
\]
The following result is due to Fradelizi and Meyer \cite{fm08a, fm08b}.
\begin{thrm}
Let \(V:\R^n \to \R\cup\{+\infty\}\) be a convex unconditional function such that \(0<\int_{\R^n} e^{-V}\,dx <\infty\) then
\begin{equation}\label{eq:uncond0}
\int_{\R^n} e^{-V}\,dx\int_{\R^n} e^{-V^*}\,dx \geq 4^n.
\end{equation}
\end{thrm}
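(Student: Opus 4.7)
The plan is to combine the reduction from Corollary~\ref{cor:ETIS} with an induction on the dimension, taking Theorem~\ref{th:1ds} as the base case and borrowing the slicing strategy of~\cite{fm08a}. As a preliminary, the Moreau-Yosida regularization used in the proof of Corollary~\ref{cor:ETIS} preserves the unconditional structure: if $V$ is unconditional, then so is $V_k = V\ic(k|\cdot|^2/2) + |\cdot|^2/(2k)$, and likewise for $V_k^*$. An unconditional analogue of Corollary~\ref{cor:ETIS} therefore reduces the problem to establishing, for every unconditional \(V \in \mc V(\R^n)\), the entropy-transport inequality
\[
H(\eta) + H(\eta^*) \leq -n\log(4e^2) + \mc T(\nu,\nu^*).
\]

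The induction is on $n$, and the base case $n=1$ is exactly Theorem~\ref{th:1ds} (in dimension one, unconditional coincides with even). For the step from $n-1$ to $n$, I would disintegrate $\eta$ along the first coordinate: $\eta = \bar\eta_1 \otimes \eta_{\cdot|x_1}$, where $\bar\eta_1$ is the even log-concave first marginal with convex even potential $\bar V(x_1) = -\log\int e^{-V(x_1,x')}\,dx'$, and each conditional $\eta_{\cdot|x_1}$ is unconditional log-concave on $\R^{n-1}$ with potential $V(x_1,\cdot) - \bar V(x_1)$; the same disintegration is applied to $\eta^*$. The entropy chain rule then gives $H(\eta) = H(\bar\eta_1) + \int H(\eta_{\cdot|x_1})\,\bar\eta_1(dx_1)$, and similarly for $H(\eta^*)$. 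Applying Theorem~\ref{th:1ds} to the pair of even marginals $(\bar\eta_1,\bar\eta_1^*)$ and the induction hypothesis to each pair of slices $(\eta_{\cdot|x_1},\eta^*_{\cdot|y_1})$ produces a bound of the form $-n\log(4e^2)$ plus a one-dimensional transport cost at the marginal level and an averaged $(n-1)$-dimensional transport cost at the slice level.

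The main obstacle is to show that this sum of slice-wise transport costs is dominated by the single transport cost $\mc T(\nu,\nu^*)$ appearing on the right-hand side we want to prove. The subtlety is that moment measures do not disintegrate in the same way as the underlying log-concave measures: the first-coordinate marginal of $\nu = \nabla V\#\eta$ is $\partial_1 V\#\eta$, which is generally different from the moment measure $\nabla \bar V\#\bar\eta_1$ of the marginal $\bar\eta_1$. I would handle this by plugging into the variational definition of $\mc T(\nu,\nu^*)$ a separable test function $f(y)=g(y_1)+h(y')$ with $g$ convex even and $h$ convex unconditional, chosen as the optimal Kantorovich potentials at the 1D and $(n-1)$-D levels respectively, and then using the invariance of $\nu,\nu^*$ under all coordinate sign changes (inherited from unconditionality) to ensure that the Fenchel conjugate $f^*(x)=g^*(x_1)+h^*(x')$ reassembles the marginal and slice contributions consistently. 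This final matching step, which crucially exploits the unconditional structure not available in the general symmetric case, is where the proof will stand or fall.
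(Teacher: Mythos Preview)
Your induction scheme has two genuine gaps. First, you frame the induction hypothesis as the \emph{restricted} entropy-transport inequality for dual pairs $(\eta,\eta^*)$ with potentials $(V,V^*)$, but then apply it to slice pairs $(\eta_{\cdot|x_1},\eta^*_{\cdot|y_1})$, whose potentials $V(x_1,\cdot)$ and $V^*(y_1,\cdot)$ are \emph{not} Legendre duals of each other for generic $x_1,y_1$; the hypothesis simply does not cover them. Second, even granting a stronger (full) ET hypothesis on the slices, your proposed aggregation of transport costs goes in the wrong direction: inserting a separable $f(y)=g(y_1)+h(y')$ into the \emph{infimum} that defines $\mc T(\nu,\nu^*)$ only yields the upper bound $\mc T(\nu,\nu^*)\leq \int f\,d\nu+\int f^*\,d\nu^*$, whereas what you need is a \emph{lower} bound on $\mc T(\nu,\nu^*)$ in terms of the marginal and slice transport costs. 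Invariance under coordinate sign changes does not reverse that inequality.

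For comparison, the paper's argument is quite different and sidesteps both issues. It does not attempt to prove the ET inequality by disintegration; instead it works directly with $a(t)=\int_{\R_+^n}e^{-tV}\,dx$ and derives from Lemma~\ref{lem:basic} and Jensen the differential inequality $a'(t)/a(t)+n/t\geq V^*\big(G(t)/(t\,a(t))\big)$, where the components of $G(t)$ are integrals of $e^{-tV_i}$ with $V_i=V|_{\{x_i=0\}}$. The unconditional structure is used only through the identity $(V_i)^*=(V^*)_i$ (restriction to a coordinate hyperplane commutes with the Legendre transform), so the induction hypothesis is applied to genuine dual pairs in dimension $n-1$. Adding the analogous inequality for $V^*$, applying Young, and integrating the resulting ODE gives $\int_{\R_+^n}e^{-V}\int_{\R_+^n}e^{-V^*}\geq 1$, which is~\eqref{eq:uncond0}. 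The moral: slice at $x_i=0$ rather than at arbitrary levels, and bypass the transport-cost bookkeeping altogether.
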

Below, we show how Lemma \ref{lem:basic} can be used to shorten the proof of \cite{fm08a}. More precisely, from Lemma \ref{lem:basic} we quickly derive the inequality \eqref{eq:uncond1} below, which is the key step of the argument, and then the rest of the proof follows the same path as in \cite{fm08a}. 
\proof
Reasoning as in the proof of Corollary \ref{cor:ETIS}, it is enough to prove \eqref{eq:uncond0} when \(V,V^*\) have full domain and are continuously differentiable on \(\R^n\). Since \(V\) and \(V^*\) are unconditional, it is clear that \eqref{eq:uncond0} is equivalent to
\begin{equation}\label{eq:uncond}
\int_{\R_+^n} e^{-V}\,dx\int_{\R_+^n} e^{-V^*}\,dx \geq 1.
\end{equation}
Let us prove \eqref{eq:uncond} by induction on \(n\). 

- For \(n=1\), \eqref{eq:uncond} follows from Theorem \ref{th:1ds}. 

- Let \(V:\R^n\to \R\), with \(n\geq2\), satisfying the assumption of the theorem. For all \(t>0\), let \(a(t) = \int_{\R_+^n} e^{-tV}\,dx\) and \(\eta_t(dx) = \frac{1}{a(t)}e^{-tV(x)}\1_{\R_+^n}(x)\,dx\).
Applying Lemma \ref{lem:basic} to \(\eta_t\) and Jensen's inequality yields
\[
H(\eta_t) + n + \log a(t) = t\int V^*(\nabla V)\,d\eta_t \geq t V^*\left(\int_{\R_+^n} \nabla V\,d\eta_t\right),\qquad t>0.
\]
Here, we have used that \(\mathcal{T}(\nu,\eta) = \int_{\R_+^n} x\cdot \nabla V(x)e^{-V(x)}\,dx = n\) because the boundary terms in the integration by parts are \(0\).
A simple integration by parts shows that, for all \(t>0\),
\[
\int_{\R_+^n} \nabla V\,d\eta_t = \frac{G(t)}{ta(t)},
\]
where \(G(t) = (a_1(t),\ldots,a_n(t))\) and \(a_i(t) = \int_{\R_+^{n-1}} e^{-t V_i(x)}\,dx\), with 
\[
V_i(x) = V(x_1,\ldots,x_{i-1},0,x_{i+1},\ldots,x_n),\qquad x \in \R_+^{n-1}.
\]
Since \(H(\eta_t)+\log a(t) = t\frac{a'(t)}{a(t)}\), we get
\begin{equation}\label{eq:uncond1}
\frac{a'(t)}{a(t)}+\frac{n}{t} \geq V^*\left(\frac{G(t)}{ta(t)}\right),\qquad \forall t>0.
\end{equation}
Denoting \(\alpha(t) = \int_{\R_+^n} e^{-tV^*}\,dx\) and \(\Gamma(t) = (\alpha_1(t),\ldots,\alpha_n(t))\), with \(\alpha_i(t) = \int_{\R_+^{n-1}} e^{-t (V^*)_i(x)}\,dx\), a similar calculation gives
\begin{equation}\label{eq:uncond2}
\frac{\alpha'(t)}{\alpha(t)}+\frac{n}{t} \geq V\left(\frac{\Gamma(t)}{t\alpha(t)}\right),\qquad \forall t>0.
\end{equation}
Adding \eqref{eq:uncond1} and \eqref{eq:uncond2} and applying Young's inequality gives, for all \(t>0\),
\[
\frac{a'(t)}{a(t)}+\frac{\alpha'(t)}{\alpha(t)}+\frac{2n}{t}\geq V^*\left(\frac{G(t)}{ta(t)}\right) + V\left(\frac{\Gamma(t)}{t\alpha(t)}\right) \geq \frac{G(t)}{ta(t)}\cdot\frac{\Gamma(t)}{t\alpha(t)} = \frac{1}{t^2a(t)\alpha(t)} \sum_{i=1}^{n}a_i(t)\alpha_i(t).
\]
Note that for all \(1\leq i\leq n\), \((V_i)^* = (V^*)_i\) because \(V\) is non-decreasing with respect to each coordinate. By induction, for all \(1\leq i\leq n\) and \(t>0\),
\[
t^{n-1}a_i(t)\alpha_i(t)  = \int_{\R_+^{n-1}} e^{-tV_i}\,dx\int_{\R_+^{n-1}} e^{-(tV_i)^*}\,dx \geq 1.
\]
Therefore, for all \(t>0\),
\[
\frac{a'(t)}{a(t)}+\frac{\alpha'(t)}{\alpha(t)}+\frac{2n}{t} \geq \frac{n}{t^{n+1}a(t)\alpha(t)},
\]
which amounts to
\[
F'(t) \geq nt^{n-1},
\]
with \(F(t)=t^{2n}a(t)\alpha(t)\). Since \(F(0)=0\), one gets
\(F(1)\geq 1\), which is exactly \eqref{eq:uncond}.
\endproof

\section*{Appendix}

For completeness' sake, we provide here the proof of the following technical result, which mostly follows the arguments given in~\cite{CEK}.
\begin{lemm}\label{lem:corr}
    For all essentially continuous log-concave probability measure \(\eta\in\mc P(\R^n)\), its moment measure \(\nu\) satisfies \(\mc T(\eta,\nu)=n\).
\end{lemm}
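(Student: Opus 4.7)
The proof of Lemma~\ref{lem:basic} already yields the identity
\[
\mathcal{T}(\nu,\eta) = \int x\cdot\nabla V(x)\,\eta(dx),
\]
where $V$ is the potential of $\eta$ and $\nu$ its moment measure. Since $\mathcal{T}$ is symmetric in its two arguments (substitute $g = f^*$ in the defining infimum and use $f^{**} = f$ for $f \in \mathcal{F}(\R^n)$), the desired equality $\mathcal{T}(\eta,\nu) = n$ reduces to the moment identity
\[
\int x\cdot\nabla V(x)\,\eta(dx) = n.
\]

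The heuristic is to integrate by parts on $K^\circ$, where $K := \mathrm{Supp}(\eta)$: the vector field $G(x) := x\,e^{-V(x)}$ has (distributional) divergence
\[
\mathrm{div}\, G = (n - x\cdot\nabla V)\,e^{-V}\quad\text{on } K^\circ,
\]
so the divergence theorem formally gives
\[
n - \int x\cdot\nabla V\,d\eta = \int_{K^\circ} \mathrm{div}\, G\,dx = \int_{\partial K}(x\cdot\nu)\,e^{-V}\,d\mathcal{H}^{n-1}.
\]
The essential continuity hypothesis says precisely that $e^{-V}$ vanishes $\mathcal{H}^{n-1}$-a.e.\ on $\partial K$, so the boundary integral is $0$ and the claim follows.

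The substance of the proof is to justify this integration by parts when $V$ is only convex (locally Lipschitz on $K^\circ$ but a priori not $C^1$) and $K$ lacks a smooth boundary. Following \cite{CEK}, the plan is to approximate: take a smooth compact convex exhaustion $K_j \subset K^\circ$ with $K_j \uparrow K^\circ$; mollify $V$ on a neighborhood of each $K_j$ to obtain convex $V_j^\varepsilon \in C^\infty$; apply the classical divergence theorem to $x e^{-V_j^\varepsilon}$ on $K_j$; and let $\varepsilon \to 0$ by dominated convergence and $\nabla V_j^\varepsilon \to \nabla V$ a.e. The interior integrals then converge as $j \to \infty$ by dominated convergence, since $x\cdot\nabla V\,e^{-V} \in L^1(\R^n)$ (Proposition~7 of \cite{CEK}). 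The main obstacle is to show that the boundary integral over $\partial K_j$ vanishes in the limit; a convenient choice is to take the $K_j$ as sublevel sets $\{\widetilde V \le R_j\}$ of a smooth coercive convex regularization $\widetilde V := V \ic (\tfrac{|\cdot|^2}{2\varepsilon}) + \tfrac{\varepsilon}{2}|\cdot|^2$, so that on $\partial K_j$ the integrand $e^{-\widetilde V}$ equals the constant $e^{-R_j}$ and $\int_{\partial K_j} x \cdot \nu\, d\mathcal{H}^{n-1} = n|K_j|$ by a second divergence-theorem computation. The boundary contribution is thus $n|K_j|e^{-R_j}$, which vanishes as $R_j \to \infty$ by standard polynomial volume bounds for sublevel sets of coercive convex functions; sending $\varepsilon \to 0$ afterwards, essential continuity is what transfers the conclusion from $\widetilde V$ back to $V$, since without it the boundary term could contribute a nonzero flux through $\partial K$ in the limit.
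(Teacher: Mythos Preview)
Your reduction to the moment identity $\int x\cdot\nabla V\,e^{-V}\,dx=n$ and the heuristic divergence computation are correct and match the paper. The justification, however, mixes two different objects and leaves the crucial step unresolved. You first apply the divergence theorem to $x\,e^{-V}$ on $K_j$ (after a local mollification that you correctly remove), so the boundary term is $\int_{\partial K_j}(x\cdot n_{K_j})\,e^{-V}\,d\mc H_{n-1}$ with the \emph{original} $V$. You then choose $K_j=\{\widetilde V\le R_j\}$ for a global Moreau--Yosida regularization $\widetilde V$ and argue that on $\partial K_j$ one has $e^{-\widetilde V}=e^{-R_j}$; but that is the wrong integrand. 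Moreover, since $\widetilde V$ has full domain, its sublevel sets exhaust $\R^n$ rather than $K^\circ$, so they cannot serve as the exhaustion $K_j\subset K^\circ$ you started from. If instead the plan is to prove the identity for $\widetilde V$ first and only then let the regularization parameter tend to $0$, the whole difficulty has been pushed into that final limit: one must show $\int x\cdot\nabla\widetilde V\,e^{-\widetilde V}\,dx\to\int x\cdot\nabla V\,e^{-V}\,dx$, and without essential continuity this fails (for $V$ the convex indicator of $[0,1]$ the left side tends to $1$ while the right side is $0$, the mass escaping through the endpoints). You only assert that essential continuity handles this, without giving a mechanism.

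The paper avoids this detour by exhausting with smooth convex bodies $K_N$ squeezed between the sublevel sets $\{V\le N\}$ and $\{V\le 2N\}$ of $V$ \emph{itself}, so that $e^{-V}\le e^{-N}$ on $\partial K_N$ directly; a polynomial bound on the diameter and on $\mc H_{n-1}(\partial K_N)$ then kills the boundary term. Essential continuity enters earlier and transparently, via the identity $\int\nabla V\,e^{-V}\,dx=0$ (Lemma~4 of \cite{CEK}), which permits the recentering $x\mapsto x-x_0$ so that $(x-x_0)\cdot\nabla V(x)$ is bounded below by convexity and the interior limit $N\to\infty$ is justified by monotone convergence.
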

\begin{proof}
    Let \(\eta(dx)=e^{-V(x)}\,dx\) be an essentially continuous probability measure, and \(\nu=\nabla V\#\eta\) its moment measure. As established in Lemma~\ref{lem:basic}, the maximal correlation is given by
    \[
    \mc T(\mu,\nu)= \int x\cdot\nabla V(x)e^{-V(x)}\,dx.
    \]
    Assuming everything is smooth, an integration by parts immediately proves that
    \[
    \mc T(\mu,\nu) = \int \dive(x)e^{-V(x)}\,dx - \int_{\partial\dom V} x\cdot n_{\dom V}(x)e^{-V(x)}\,d\mc H_{n-1}(x) = n, 
    \]
    since \(e^{-V(x)}=0\) for \(\mc H_{n-1}\)-almost all \(x\in\dom V\). In the general case, however, \(V\) is only Lipschitz on the interior of its domain. Thus, let us choose \(x_0\) in the interior of the domain of \(V\). According to~\cite[Lemma 4]{CEK}, 
    \[
    \int \nabla V(x)e^{-V(x)}\,dx = 0
    \]
    by essential continuity, and thus
    \[
    \mc T(\mu,\nu)= \int x\cdot\nabla V(x)e^{-V(x)}\,dx = \int (x-x_0)\cdot\nabla V(x)e^{-V(x)}\,dx.
    \]
    Convexity of \(V\) implies that the function \(x\mapsto (x-x_0)\cdot \nabla V(x)\) is bounded from below by some constant (which is, of course, integrable against \(\eta\)), and so, if \((K_N)_{N\in\N}\) is an increasing sequence of compact sets such that \(\bigcup_N K_N=\dom V\),
    \[
    \int (x-x_0)\cdot\nabla V(x)e^{-V(x)}\,dx = \lim_{N\to\infty} \int_{K_N} (x-x_0)\cdot\nabla V(x)e^{-V(x)}\,dx.
    \]
    For \(N\in\N\), with $N>\min V$, the sets \(\{ V\leq N\}\) are convex, closed because of lower semicontinuity, with non empty interior since $\int e^{-V}>0$, bounded since \(\lim_{\abs x\to+\infty }V(x)=+\infty\) and strictly increasing by the essential continuity of \(e^{-V}\). Since convex bodies may be approximated by smooth convex bodies (see \cite[Lemma 2.3.2]{hor07}), we can find a sequence \((K_N)\) of smooth convex bodies such that 
    \[
    \{ V \leq N \} \subset K_N \subset \{ V \leq 2N \} 
    \]
    for all $N>\min V$. It is clear that then \(\bigcup_N K_N = \dom V\). Since \(K_N\) is smooth, and \(V\) is Lipschitz on \(K_N\), the divergence theorem applies:
    \[
    \int_{K_N} (x-x_0)\cdot\nabla V(x)e^{-V(x)}\,dx = \int_{K_N} \dive(x)e^{-V(x)}\,dx - \int_{\partial K_N} n_{K_N}(x)\cdot(x-x_0)e^{-V(x)}\,d\mc H_{n-1}(x), 
    \]
    where \(n_{K_N}(x)\) is the outer normal vector to \(K_N\) at \(x\). Clearly,
    \[
    \lim_{N\to\infty} \int_{K_N} \dive(x)e^{-V(x)}\,dx = n\lim_{N\to+\infty}\eta(K_N)=n,
    \]
    and we will show that the second term converges towards zero.
    To that end, note that since \(e^{-V(x)}\) is integrable, there exist constants \(a>0\) and \(b\) such that \(V(x) \geq a\abs x + b\). As an immediate consequence, for all \(N>b\), the sublevel set \(\{V\leq N\}\) is included in the ball of center \(0\) and of radius \(R_N=(N-b)/a\). Hence, whenever \(N\) is large enough so that \(x_0\in K_N\),
    \begin{align*}
    \abs{\int_{\partial K_N} n_{K_N}(x)\cdot(x-x_0)e^{-V(x)}\,d\mc H_{n-1}(x)} 
    &\leq\int_{\partial K_N} \abs{x-x_0}e^{-V(x)}\,d\mc H_{n-1}(x)\\
    &\leq  2R_{2N}\,e^{-N} \mc H_{n-1}(\partial K_N).
    \end{align*}
    Finally, if \(K,L\) are two convex bodies such that \(K\subset L\), then \(\mc H_{n-1}(\partial K)\leq \mc H_{n-1}(\partial L)\) (see \cite[(5.25)]{sch14}), and so \(\mc H_{n-1}(\partial K_N)\leq R_{2N}^{n-1}\mc H_{n-1}(\Sd^{n-1})\), which is enough to conclude that
    \[
    \abs{\int_{\partial K_N} n_{K_N}(x)\cdot(x-x_0)e^{-V(x)}\,d\mc H_{n-1}(x)} 
    \leq  p(N)e^{-N}, 
    \]
    where \(p\) is some polynomial, which proves our claim.
\end{proof}

\section*{Acknowledgments} The authors were supported by a grant of the Simone and Cino Del Duca foundation. The third author has benefited from a post doctoral position funded by the Simone and Cino Del Duca foundation. This research has been conducted within the FP2M federation (CNRS FR 2036). The authors warmly thank Anne Estrade, the head of MAP 5, for making the meetings that led to this publication possible in the midst of the epidemic situation.

\bibliographystyle{alpha}
{\footnotesize
\bibliography{notes}}

\newcommand{\etalchar}[1]{$^{#1}$}
\begin{thebibliography}{FHM{\etalchar{+}}21}

\bibitem[Ber20a]{ber20a}
B.~Berndtsson.
\newblock Bergman kernels for {P}aley-{W}iener spaces and {N}azarov's proof of
  the {B}ourgain-{M}ilman theorem, 2020.
\newblock arXiv:2008.00837.

\bibitem[Ber20b]{ber20b}
B.~Berndtsson.
\newblock Complex integrals and {K}uperberg's proof of the {B}ourgain-{M}ilman
  theorem, 2020.
\newblock arXiv:2008.00838.

\bibitem[BF13]{bf13}
F.~Barthe and M.~Fradelizi.
\newblock The volume product of convex bodies with many hyperplane symmetries.
\newblock {\em Amer. J. Math.}, 135(2):311--347, 2013.

\bibitem[Blo14]{blo14}
Z.~Blocki.
\newblock A lower bound for the {B}ergman kernel and the {B}ourgain-{M}ilman
  inequality.
\newblock In {\em Geometric aspects of functional analysis}, volume 2116 of
  {\em Lecture Notes in Math.}, pages 53--63. Springer, Cham, 2014.

\bibitem[BM87]{bm87}
J.~Bourgain and V.~D. Milman.
\newblock New volume ratio properties for convex symmetric bodies in {${\bf
  R}^n$}.
\newblock {\em Invent. Math.}, 88(2):319--340, 1987.

\bibitem[Bob96]{bob96}
S.~Bobkov.
\newblock Extremal properties of half-spaces for log-concave distributions.
\newblock {\em Ann. Probab.}, 24(1):35--48, 1996.

\bibitem[CEK15]{CEK}
D.~Cordero-Erausquin and B.~Klartag.
\newblock Moment measures.
\newblock {\em J. Funct. Anal.}, 268(12):3834--3866, 2015.

\bibitem[FHM{\etalchar{+}}21]{fhmrz21}
M.~Fradelizi, A.~Hubard, M.~Meyer, E.~Roldán-Pensado, and A.~Zvavitch.
\newblock Equipartitions and {M}ahler volumes of symmetric convex bodies, 2021.
\newblock arXiv:1904.10765, to appear in Amer. J. Math.

\bibitem[FM08a]{fm08b}
M.~Fradelizi and M.~Meyer.
\newblock Increasing functions and inverse {S}antal\'{o} inequality for
  unconditional functions.
\newblock {\em Positivity}, 12(3):407--420, 2008.

\bibitem[FM08b]{fm08a}
M.~Fradelizi and M.~Meyer.
\newblock Some functional inverse {S}antal\'{o} inequalities.
\newblock {\em Adv. Math.}, 218(5):1430--1452, 2008.

\bibitem[FM10]{fm10}
M.~Fradelizi and M.~Meyer.
\newblock Functional inequalities related to {M}ahler's conjecture.
\newblock {\em Monatsh. Math.}, 159(1-2):13--25, 2010.

\bibitem[FN21]{fn21}
M.~Fradelizi and E.~Nakhle.
\newblock The functional form of {M}ahler conjecture for even log-concave
  functions in dimension $2$, 2021.
\newblock arXiv:2101.08065.

\bibitem[GMR88]{GMR88}
Y.~Gordon, M.~Meyer, and S.~Reisner.
\newblock Zonoids with minimal volume-product---a new proof.
\newblock {\em Proc. Amer. Math. Soc.}, 104(1):273--276, 1988.

\bibitem[Goz21]{goz21}
N.~Gozlan.
\newblock The deficit in the {G}aussian {L}og-{S}obolev inequality and inverse
  {S}antal\'o inequalities.
\newblock {\em International Mathematics Research Notices}, 2021.
\newblock To appear.

\bibitem[GPV14]{gpv14}
A.~Giannopoulos, G.~Paouris, and B.~H. Vritsiou.
\newblock The isotropic position and the reverse {S}antal\'{o} inequality.
\newblock {\em Israel J. Math.}, 203(1):1--22, 2014.

\bibitem[H\"07]{hor07}
L.~H\"{o}rmander.
\newblock {\em Notions of convexity}.
\newblock Modern Birkh\"{a}user Classics. Birkh\"{a}user Boston, Inc., Boston,
  MA, 2007.
\newblock Reprint of the 1994 edition.

\bibitem[IS20a]{is20}
H.~Iriyeh and M.~Shibata.
\newblock Symmetric {M}ahler's conjecture for the volume product in the
  {$3$}-dimensional case.
\newblock {\em Duke Math. J.}, 169(6):1077--1134, 2020.

\bibitem[IS20b]{is21}
Hiroshi Iriyeh and Masataka Shibata.
\newblock Minimal volume product of three dimensional convex bodies with
  various discrete symmetries, 2020.

\bibitem[KM05]{km05}
B.~Klartag and V.~D. Milman.
\newblock Geometry of log-concave functions and measures.
\newblock {\em Geom. Dedicata}, 112:169--182, 2005.

\bibitem[Kup08]{kup08}
G.~Kuperberg.
\newblock From the {M}ahler conjecture to {G}auss linking integrals.
\newblock {\em Geom. Funct. Anal.}, 18(3):870--892, 2008.

\bibitem[Mah39a]{mah39b}
K.~Mahler.
\newblock Ein {M}inimalproblem f\"{u}r konvexe {P}olygone.
\newblock {\em Mathematica (Zutphen)}, 1939.

\bibitem[Mah39b]{mah39a}
K.~Mahler.
\newblock Ein \"{U}bertragungsprinzip f\"{u}r konvexe {K}\"{o}rper.
\newblock {\em \v{C}asopis P\v{e}st. Mat. Fys.}, 68:93--102, 1939.

\bibitem[Mey86]{mey86}
M.~Meyer.
\newblock Une caract\'{e}risation volumique de certains espaces norm\'{e}s de
  dimension finie.
\newblock {\em Israel J. Math.}, 55(3):317--326, 1986.

\bibitem[Naz12]{naz12}
F.~Nazarov.
\newblock The {H}\"{o}rmander proof of the {B}ourgain-{M}ilman theorem.
\newblock In {\em Geometric aspects of functional analysis}, volume 2050 of
  {\em Lecture Notes in Math.}, pages 335--343. Springer, Heidelberg, 2012.

\bibitem[Rei86]{R86}
S.~Reisner.
\newblock Zonoids with minimal volume-product.
\newblock {\em Math. Z.}, 192(3):339--346, 1986.

\bibitem[{Roc}97]{Rock}
R.~Tyrrell {Rockafellar}.
\newblock {\em {Convex analysis}}.
\newblock Princeton, NJ: Princeton University Press, 1997.

\bibitem[San49]{san49}
L.~A. Santal\'{o}.
\newblock An affine invariant for convex bodies of {$n$}-dimensional space.
\newblock {\em Portugal. Math.}, 8:155--161, 1949.

\bibitem[{San}16]{Sant16}
F.~{Santambrogio}.
\newblock {Dealing with moment measures via entropy and optimal transport}.
\newblock {\em {J. Funct. Anal.}}, 271(2):418--436, 2016.

\bibitem[Sch14]{sch14}
Rolf Schneider.
\newblock {\em Convex bodies: the {B}runn-{M}inkowski theory}, volume 151 of
  {\em Encyclopedia of Mathematics and its Applications}.
\newblock Cambridge University Press, Cambridge, expanded edition, 2014.

\bibitem[SR81]{sr81}
J.~Saint-Raymond.
\newblock Sur le volume des corps convexes sym\'{e}triques.
\newblock In {\em Initiation {S}eminar on {A}nalysis: {G}. {C}hoquet-{M}.
  {R}ogalski-{J}. {S}aint-{R}aymond, 20th {Y}ear: 1980/1981}, volume~46 of {\em
  Publ. Math. Univ. Pierre et Marie Curie}, pages Exp. No. 11, 25. Univ. Paris
  VI, Paris, 1981.

\bibitem[{Vil}09]{Vil09}
C.~{Villani}.
\newblock {\em {Optimal transport. Old and new}}, volume 338.
\newblock Berlin: Springer, 2009.

\end{thebibliography}

\end{document}